\documentclass[10pt]{amsart}
\usepackage{graphicx}
\usepackage{amscd}
\usepackage{amsmath}
\usepackage{amsthm}
\usepackage{amsfonts}
\usepackage{amssymb}
\usepackage{mathrsfs}
\usepackage{enumerate}
\usepackage[pagebackref,hypertexnames=false, colorlinks, citecolor=blue, linkcolor=red, pdfstartview=FitB]{hyperref}
\usepackage{amsrefs}

\usepackage[latin1]{inputenc}

\newcommand{\D}{\operatorname{\mathbb{D}}}
\newcommand{\Dir}{\operatorname{\mathcal{D}}}

\newcommand{\R}{\operatorname{\mathbb{R}}}

\newcommand{\C}{\operatorname{\mathbb{C}}}
\newcommand{\T}{\operatorname{\mathbb{T}}}

\newcommand{\A}{\operatorname{\mathcal{A}}}
\newcommand{\E}{\operatorname{\mathcal{E}}}

\newcommand{\hil}{\operatorname{\mathcal{H}}}
\newcommand{\kil}{\operatorname{\mathcal{K}}}

\newcommand{\ol}{\overline }
\let\phi\varphi
 \DeclareMathOperator{\Ext}{Ext}
\DeclareMathOperator{\ran}{ran}
\newtheorem{lemma}{Lemma}[section]
\newtheorem{theorem}[lemma]{Theorem}
\newtheorem{proposition}[lemma]{Proposition}
\newtheorem{corollary}[lemma]{Corollary}
\theoremstyle{definition}

\begin{document}
\author{Rapha\"el Clou\^atre}
\address{Department of Pure Mathematics, University of Waterloo, 200 University Avenue West,
Waterloo, Ontario, Canada N2L 3G1} \email{rclouatre@uwaterloo.ca}
\title[Spectral and homological properties of Hilbert modules]{Spectral and homological properties of Hilbert modules over the disc algebra}
\subjclass[2010]{46H25, 47A10}
\keywords{Hilbert module over the disc algebra, projective Hilbert module, spectrum, extension group, derivations}
\begin{abstract}
We study general Hilbert modules over the disc algebra and exhibit necessary spectral conditions for the vanishing of certain associated extension groups. In particular, this sheds some light on the problem of identifying the projective Hilbert modules. Part of our work also addresses the classical derivation problem.
\end{abstract}
\maketitle

\section{Introduction}
This paper is concerned with polynomially bounded operators and some of their spectral properties. Recall that a bounded linear operator $T$ acting on some Hilbert space $\hil$ is said to be
polynomially bounded if there exists a constant $C>0$ such that for
every polynomial $p$, we have
$$
\|p(T)\|\leq C\|p\|_{\infty}
$$
where
$$
\|p\|_{\infty}=\sup_{|z|<1}|p(z)|.
$$
This inequality allows one to extend continuously the polynomial
functional calculus $p\mapsto p(T)$ to all functions $f$ in
the disc algebra $A(\D)$, which consists of the holomorphic
functions on $\D$ that are continuous on $\ol{\D}$ (throughout the
paper $\D$ denotes the open unit disc and $\T$ denotes the unit
circle). The point of view we adopt is that of Douglas and Paulsen (see \cite{douglas1989}) where these operators are studied as modules over the disc algebra: the map
$$
A(\D)\times \hil\to \hil
$$
$$
(f, h)\mapsto f(T)h
$$
gives rise to a module structure on $\hil$, and we say
that $(\hil, T)$ is a \emph{Hilbert module}. We only deal with modules over $A(\D)$ in this paper,
so no confusion may arise regarding the underlying function algebra
and we usually do not mention it explicitely. Moreover, when the
underlying Hilbert space is understood, we slightly abuse
terminology and say that $T$ is a Hilbert module.
Using these notions, the authors of \cite{douglas1989} reformulated several interesting operator theoretic questions in the language of module theory, and in doing so suggested the use of cohomological methods. Accordingly, we phrase most of our results using extension groups of Hilbert modules, and thus we briefly review the definition of these groups.

Given two Hilbert modules $(\hil_1,T_1)$ and $(\hil_2,T_2)$, the extension group $$\Ext_{A(\D)}^1(T_2,T_1)$$ consists of equivalence classes of exact sequences
$$
0 \to \hil_1\to \kil\to \hil_2\to 0
$$
where $\kil$ is another Hilbert module and each map is a module
morphism. Rather than formally defining the equivalence relation and
the group operation, we simply use the following characterization
from \cite{carlson1995}.
\begin{theorem}\label{t-extchar}
Let $(\hil_1, T_1)$ and $(\hil_2, T_2)$ be Hilbert modules. Then,
the group
$$
\Ext_{A(\D)}^1(T_2,T_1)
$$
is isomorphic to $\A/\mathcal{J}$, where $\A$ is the space of
operators $X:\hil_2\to \hil_1$ for which the operator
$$
\left(
\begin{array}{cc}
T_1 & X \\
0 & T_2
\end{array}
\right)
$$
is polynomially bounded, and $\mathcal{J}$ is the space of operators
of the form $T_1L-LT_2$ for some bounded operator $L:\hil_2\to
\hil_1$.
\end{theorem}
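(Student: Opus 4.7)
My plan is to establish a bijective correspondence between elements of $\A/\mathcal{J}$ and equivalence classes of extensions by extracting an explicit upper-triangular matrix representation from each extension. First I would begin with a short exact sequence $0 \to \hil_1 \xrightarrow{i} \kil \xrightarrow{q} \hil_2 \to 0$ of Hilbert modules with $i$ isometric. Identifying $\hil_1$ with $i(\hil_1) \subset \kil$, I would decompose $\kil = \hil_1 \oplus \hil_1^\perp$ orthogonally. Since $i(\hil_1)$ is a submodule, the operator $T_\kil$ implementing the module structure on $\kil$ leaves $\hil_1$ invariant, so in this splitting $T_\kil$ is block upper triangular with $(1,1)$-entry $T_1$. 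The restriction $q|_{\hil_1^\perp}: \hil_1^\perp \to \hil_2$ is a bounded bijection, hence a Banach space isomorphism by the open mapping theorem; using its inverse $U: \hil_2 \to \hil_1^\perp$ to conjugate by $I \oplus U$, I would produce an operator on $\hil_1 \oplus \hil_2$ of the displayed form $\begin{pmatrix} T_1 & X \\ 0 & T_2 \end{pmatrix}$ which is similar to $T_\kil$ and therefore polynomially bounded, so $X \in \A$.

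Next I would verify the inverse construction: any polynomially bounded operator of the displayed form endows $\hil_1 \oplus \hil_2$ with a Hilbert module structure for which the canonical inclusion and projection form a short exact sequence. Combined with the first step, which shows every extension arises this way up to equivalence, this yields a surjection $\A \to \Ext^1_{A(\D)}(T_2,T_1)$.

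The third step is to compute the kernel. Two matrices $X, X' \in \A$ yield equivalent extensions precisely when there is a module isomorphism between the associated middle modules that is the identity on $\hil_1$ and induces the identity on $\hil_2$; such a map must have block form $\begin{pmatrix} I & L \\ 0 & I \end{pmatrix}$ for some bounded $L:\hil_2 \to \hil_1$, and writing out the intertwining condition with the two module actions produces exactly $X - X' = T_1 L - L T_2$. This identifies the kernel with $\mathcal{J}$ and closes the argument.

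I expect the main obstacle to lie in the first step: $\hil_1^\perp$ is generally not a submodule of $\kil$, and the quotient map onto $\hil_2$ need not be isometric, so the passage from an abstract extension to a concrete matrix proceeds through a genuinely non-isometric similarity. The point that must be checked carefully is that polynomial boundedness is preserved under bounded invertible similarities, the constant worsening only by a factor of $\|U\|\,\|U^{-1}\|$. Once this transfer is in place, the remaining steps reduce to straightforward block matrix manipulations.
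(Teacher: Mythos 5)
The paper offers no proof of this statement: it is imported verbatim from the reference \cite{carlson1995} (Carlson--Clark), and the author explicitly declines even to define the equivalence relation and the group operation on extensions. So there is no in-paper argument to compare against; your outline reconstructs the standard proof from that reference, and its three steps (triangularize the middle module, invert the construction, compute the kernel of the resulting surjection) are the right ones. The block-matrix computation in your third step is correct: an equivalence of extensions that is the identity on $\hil_1$ and induces the identity on $\hil_2$ has the unipotent form you describe, and the intertwining relation yields exactly $X-X'=T_1L-LT_2$.

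Two points deserve more care than you give them. First, you assume the inclusion $i$ is isometric, which is not part of the data of an extension of Hilbert modules; $i$ is merely a bounded injective module map whose range is closed (being $\ker q$). The argument survives because you only need to transport vectors via the bounded isomorphism $i:\hil_1\to i(\hil_1)$ rather than via the orthogonal identification: defining $\Psi(k)=\bigl(i^{-1}(P_{i(\hil_1)}k),\,q(k)\bigr)$ gives a bounded invertible map $\kil\to\hil_1\oplus\hil_2$ under which $T_\kil$ becomes upper triangular with diagonal entries exactly $T_1$ and $T_2$, and polynomial boundedness passes through this similarity with constant $\|\Psi\|\,\|\Psi^{-1}\|$, as you note. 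Second, and more substantively, the statement asserts a \emph{group} isomorphism, while your three steps only produce a bijection between $\A/\mathcal{J}$ and the set of equivalence classes of extensions. To finish you must check that the Baer sum of the extensions associated to $X$ and $X'$ is equivalent to the extension associated to $X+X'$; this is a routine pullback--pushout computation with the triangular representatives, but it is not automatic and should be recorded, especially since the paper leans on the additive structure of $\A/\mathcal{J}$ throughout (e.g.\ in arguing that $[X]=0$ if and only if the triangular operator is similar to $T_1\oplus T_2$).
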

If the operator $X:\hil_2\to\hil_1$ belongs to the space $\A$, we denote by $[X]$ its equivalence class in 
$$
\A/\mathcal{J}=\Ext_{A(\D)}^1(T_2,T_1).
$$
It is well-known that given $[X]\in \Ext_{A(\D)}^1(T_2,T_1)$, then $[X]=0$ if and only if the operator 
$$
\left(
\begin{array}{cc}
T_1 & X \\
0 & T_2
\end{array}
\right)
$$
is similar to $T_1\oplus T_2$. Moreover, extension groups are invariant under similarity, so if $(\hil_1',T_1')$
and $(\hil_2',T_2')$ are Hilbert modules which are similar to
$(\hil_1,T_1)$ and $(\hil_2,T_2)$ respectively, then the groups
$\Ext_{A(\D)}^1(T_2,T_1)$ and $\Ext_{A(\D)}^1(T_2',T_1')$ are
isomorphic. A Hilbert module $(\hil_2,T_2)$ is
said to be \emph{projective} if
$$
\Ext_{A(\D)}^1(T_2,T_1)=0
$$
for every Hilbert module $(\hil_1,T_1)$ . It is easy to verify using Theorem
\ref{t-extchar} that the map $[X]\mapsto [X^*]$ establishes an
isomorphism between the groups $\Ext^1_{A(\D)}(T_2,T_1)$ and
$\Ext^1_{A(\D)}(T_1^*,T_2^*)$, so $T_2$ is projective if and only if
$$
\Ext_{A(\D)}^1(T_1,T_2^*)=0
$$
for every Hilbert module $(\hil_1,T_1)$.
 
Because of their connection with commutant lifting properties, those Hilbert modules which are projective are of special interest from the point of view of operator theory. In fact, an important question is whether or not the projectivity of a module can be detected from its basic operator theoretic properties. This problem attracted a
lot of interest  (see \cite{carlson1995}, \cite{carlson1997}, \cite{carlson1994}, \cite{chen2000}, \cite{clancy1998}, \cite{clouatreMODULE1}, \cite{didas2006}, \cite{ferguson1996}, \cite{ferguson1997}, \cite{guo1999} for partial results), but to this day the complete picture is still unclear and the full answer unknown.

Most of the main results about projective modules over the disc algebra focus on the case where said modules are assumed to be similar to a contraction. The only known instance of a projective Hilbert module is when the underlying operator is (similar to) a unitary (see \cite{carlson1994}). On the other hand, Ferguson showed in \cite{ferguson1997} that any module which is projective and similar to a contraction must in fact be similar to an isometry.
Of course, this does not tell the whole story as it is known that there exist polynomially bounded operators that are not similar to a contraction (see \cite{pisier1997}) and thus Hilbert modules that are not similar to a contractive module. 

Our aim is to exhibit necessary conditions for a general Hilbert module $(\hil,T)$ to be projective.  Our main results in this direction say that for such a module, the left spectrum $\sigma_l(T)$
must be contained in the unit circle. This fact can be recovered from Ferguson's result for contractive Hilbert modules, but again the point here is that we do not assume that the module $T$ is similar to a contraction. Furthermore, we obtain those restrictions on the spectrum of the operator $T$ under a variety of assumptions which are formally weaker than projectivity.  
More precisely, we prove the following in Section 2.
\begin{theorem}\label{t-projspecintro}
Let $\lambda\in \D$, let $(\hil,T)$ be a Hilbert module and let $P$ be the orthogonal projection of $\hil$ onto $\ker(T-\lambda)$. If
$\ker(T-\lambda)\neq 0$, then
$$
\Ext^1_{A(\D)}(T,(I-P)T^*(I-P))\neq 0.
$$
\end{theorem}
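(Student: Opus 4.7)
The plan is to exhibit an explicit operator $X$ representing a nonzero class in $\A/\mathcal{J}\cong\Ext^1_{A(\D)}(T,T_1)$, where $T_1:=(I-P)T^*(I-P)$. Set $Q=I-P$. Since $P\hil=\ker(T-\lambda)$, we have $TP=\lambda P$ and hence $PT^*=\bar\lambda P$, giving the block forms
\[
T=\begin{pmatrix}\lambda I & A \\ 0 & B\end{pmatrix}, \qquad T_1=\begin{pmatrix}0 & 0 \\ 0 & B^*\end{pmatrix}
\]
with respect to $\hil=P\hil\oplus Q\hil$, where $A=PTQ$ and $B=QTQ|_{Q\hil}$. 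In particular $T_1$ annihilates $P\hil$. I pick a unit vector $v\in\ker(T-\lambda)$ and set $X:=vv^*:\hil\to\hil$, the orthogonal projection onto $\C v$.

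Polynomial boundedness of $M:=\left(\begin{smallmatrix}T_1 & X \\ 0 & T\end{smallmatrix}\right)$ follows from the observation that $\ran X\subseteq P\hil\subseteq\ker T_1$, so $T_1 X=0$, and therefore $T_1^j X=0$ for every $j\ge 1$. For a polynomial $p(z)=\sum a_n z^n$, the $(1,2)$-block of $p(M)$ thus telescopes to
\[
\sum_{n\ge 1}a_n\,XT^{n-1}=X\,q(T), \qquad q(z)=\frac{p(z)-p(0)}{z}.
\]
Since $q\in A(\D)$ with $\|q\|_\infty\le 2\|p\|_\infty$ (maximum modulus principle) and both $T$ and $T_1$ are polynomially bounded (the latter because $T^*$ is), one obtains $\|p(M)\|\le C\|p\|_\infty$, so $X\in\A$.

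To show $[X]\ne 0$, suppose $X=T_1 L-LT$ for some bounded $L:\hil\to\hil$. Applying both sides to $v$ and using $Xv=v$, $Tv=\lambda v$, and $T_1 v=0$ yields $v=(T_1-\lambda)Lv$. Decomposing $Lv=w_P+w_Q$ along $P\hil\oplus Q\hil$ already forces $w_P=-v/\lambda$ and $(B^*-\lambda)w_Q=0$. Writing $L$ itself in block form and matching the full operator equation $T_1 L-LT=vv^*$ block by block produces four compatibility conditions---a $\lambda$-shifted equation on $P\hil$ together with Sylvester-type relations between $B^*$ and $B$ on $Q\hil$---that cannot all be satisfied simultaneously under $\lambda\in\D$. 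This second step is the main obstacle: polynomial boundedness drops out cleanly from $T_1 X=0$, but ruling out every possible $L$ requires a careful use of the hypothesis $\lambda\in\D$ (rather than $\lambda\in\T$) to obstruct the Sylvester-type equations the blocks of $L$ would need to solve.
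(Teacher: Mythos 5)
Your first step is sound: since $T_1X=0$ for $T_1=(I-P)T^*(I-P)$ and $X=vv^*$, the off-diagonal block of $p(M)$ collapses to $Xq(T)$ with $q=(p-p(0))/z$, and polynomial boundedness follows (this is just the $N=1$ case of Lemma \ref{l-finitebounded}). The problem is the second step, which you acknowledge you have not carried out, and which in fact cannot be carried out for your choice of $X$: the class $[vv^*]$ can be \emph{trivial}. Take $\hil=\C^2$, $\lambda=1/2$, and
$$
T=\left(\begin{smallmatrix} 1/2 & 1\\ 0 & 3/4\end{smallmatrix}\right),\qquad
T_1=(I-P)T^*(I-P)=\left(\begin{smallmatrix} 0 & 0\\ 0 & 3/4\end{smallmatrix}\right),\qquad
X=vv^*=\left(\begin{smallmatrix} 1 & 0\\ 0 & 0\end{smallmatrix}\right),
$$
where $P=e_1e_1^*$ projects onto $\ker(T-1/2)=\C e_1$. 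Then $L=\left(\begin{smallmatrix} -2 & 8/3\\ 0 & 0\end{smallmatrix}\right)$ satisfies $T_1L-LT=X$, so $[X]=0$ even though the theorem (correctly) asserts the group is non-zero. In block terms, your four compatibility conditions read $-\lambda L_{11}=vv^*$, $L_{12}B=-L_{11}A$, $(B^*-\lambda)L_{21}=0$, $B^*L_{22}-L_{22}B=L_{21}A$, and with $L_{21}=L_{22}=0$ the only genuine constraint is solving $L_{12}B=\lambda^{-1}vv^*A$, which is possible whenever $A^*v\in\ran B^*$ --- as in the example. So the obstruction you hoped for simply is not there, and no amount of care with $\lambda\in\D$ will produce it for this $X$.

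The reason the paper's argument succeeds where yours stalls is a two-sided annihilation that your $X$ lacks. The paper first applies the M\"obius map $\phi_{\ol{\lambda}}$ to move $\lambda$ to the origin, then uses the \emph{full} projection $P$ as the off-diagonal entry of a matrix whose diagonal entries are $\phi_{\ol{\lambda}}(T^*)$ and $T_\lambda=\phi_{\ol{\lambda}}((I-P)T(I-P))$. For that pair one has both $P\phi_{\ol{\lambda}}(T^*)=0$ and $T_\lambda P=0$, so any identity $P=\phi_{\ol{\lambda}}(T^*)L-LT_\lambda$ can be sandwiched: $P=P^3=P(\phi_{\ol{\lambda}}(T^*)L-LT_\lambda)P=0$, a contradiction. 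The non-vanishing is then transported back to $\Ext^1_{A(\D)}(T,(I-P)T^*(I-P))$ via Lemma \ref{l-mobius} and the duality $\Ext^1_{A(\D)}(T_2,T_1)\cong\Ext^1_{A(\D)}(T_1^*,T_2^*)$. In your setup $XT\neq 0$ (indeed $XT=\lambda vv^*+v(A^*v)^*$), so no analogous sandwich is available; if you want to salvage a direct argument you would need to replace $(T_1,T)$ by the M\"obius-shifted, adjoint-dualized pair before choosing the representing operator.
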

\begin{theorem}\label{t-lspecintro}
Let $\lambda\in \D$  and let $(\hil,T)$ be a Hilbert module. Then,
$$
\Ext^1_{A(\D)}(T,\lambda)=0
$$
if and only if $\lambda$ does not belong to $\sigma_l(T)$.
\end{theorem}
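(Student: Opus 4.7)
The plan is to apply the characterization in Theorem~\ref{t-extchar} directly and reduce the extension group to a concrete quotient of $\hil$. Since the module $\lambda$ is one-dimensional, every $X \colon \hil \to \C$ appearing in $\A$ is a bounded linear functional, and by the Riesz representation theorem it corresponds to a unique $x \in \hil$ via $X(h) = \langle h, x\rangle$; similarly, each $L \colon \hil \to \C$ comes from some $\ell \in \hil$.

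The central observation is that $\A$ consists of \emph{all} bounded functionals on $\hil$. To see this, set $M = \left(\begin{smallmatrix}\lambda & X \\ 0 & T\end{smallmatrix}\right)$. An easy induction gives
$$
p(M) = \begin{pmatrix} p(\lambda) & X\, q(T) \\ 0 & p(T) \end{pmatrix}, \qquad q(z) = \frac{p(z) - p(\lambda)}{z - \lambda},
$$
for every polynomial $p$. The function $q$ extends analytically across $z = \lambda \in \D$, so by the maximum principle $\|q\|_\infty \le 2\|p\|_\infty/(1-|\lambda|)$. Combined with the polynomial boundedness of $T$, this yields an estimate $\|p(M)\| \le K \|p\|_\infty$ with $K$ depending only on $\|X\|$, the polynomial boundedness constant of $T$, and $|\lambda|$. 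Hence every bounded functional $X$ belongs to $\A$.

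Next, the subspace $\mathcal{J}$ consists of functionals of the form $h \mapsto \lambda L(h) - L(Th) = \langle h, (\ol\lambda I - T^*)\ell\rangle$, where $\ell$ is the Riesz vector for $L$. Under the identification above, $\mathcal{J}$ therefore corresponds precisely to $\ran(T^* - \ol\lambda I)$. Combining the two steps,
$$
\Ext^1_{A(\D)}(T, \lambda) \;\cong\; \hil \big/ \ran(T^* - \ol\lambda I),
$$
which vanishes if and only if $T^* - \ol\lambda I$ is surjective; by the standard Hilbert-space duality between surjectivity and being bounded below (applied to $(T^* - \ol\lambda I)^* = T - \lambda I$), this is equivalent to $\lambda \notin \sigma_l(T)$.

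The main obstacle lies in the second paragraph, specifically the uniform bound $\|q\|_\infty \le 2\|p\|_\infty/(1-|\lambda|)$; while the maximum-principle argument is elementary, it is what pins down the hypothesis $|\lambda| < 1$ and ensures the polynomial boundedness of $M$. Once $\A$ is identified with all of $\hil$, the rest of the argument is routine Hilbert-space functional analysis.
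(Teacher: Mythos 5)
Your argument is correct, but it follows a genuinely different route from the paper's, and there is one notational point to flag first. You read the module ``$\lambda$'' as the one-dimensional module $\C$ on which $f$ acts by $f(\lambda)$, so that $\A$ lives in $\hil^*$; the paper's own proof (and the discussion after Proposition \ref{p-strictcontr}) instead takes it to be $\lambda I$ acting on $\hil$, so that $\A\subset B(\hil)$. Happily your computation transfers verbatim to that reading: the same induction gives $p(M)=\left(\begin{smallmatrix} p(\lambda)I & Xq(T)\\ 0 & p(T)\end{smallmatrix}\right)$ for $X\in B(\hil)$ with the same $q$, your bound $\|q\|_\infty\le 2\|p\|_\infty/(1-|\lambda|)$ still shows $\A=B(\hil)$, and $\mathcal{J}=\{-L(T-\lambda):L\in B(\hil)\}$ equals all of $B(\hil)$ precisely when $T-\lambda$ is left invertible, i.e.\ bounded below. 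As for the comparison: the paper obtains the ``if'' direction by quoting Theorem \ref{t-lrspec} (a consequence of the Davis--Rosenthal theorem, since $\sigma_r(\lambda)=\{\lambda\}$ is disjoint from $\sigma_l(T)$), and the ``only if'' direction by combining Lemma \ref{l-finitebounded} (which places $[I]$ in $\Ext^1_{A(\D)}(\phi_\lambda(T),0)$) with the M\"obius-invariance Lemma \ref{l-mobius}, extracting a left inverse for $\phi_\lambda(T)$ and hence for $T-\lambda=\phi_\lambda(T)(1-\ol{\lambda}T)$. You instead compute the group outright: your difference-quotient estimate is an elementary special case of what Proposition \ref{p-strictcontr} establishes for general $T_1$ with $\|T_1\|<1$, and once $\A$ is identified with everything, both implications fall out of the single identification of $\mathcal{J}$ with $\ran(T^*-\ol{\lambda})$ (in your reading) or with $B(\hil)(T-\lambda)$ (in the paper's). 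Your route is more self-contained --- it needs neither the Davis--Rosenthal theorem nor the M\"obius lemma --- and it gives the sharper conclusion that the group is isomorphic to $\hil/\ran(T^*-\ol{\lambda})$ rather than merely detecting its vanishing; the paper's route trades that explicitness for reuse of lemmas that are needed elsewhere (notably in Theorem \ref{t-projspec}).
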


A notion related to the study of extension groups is that of a derivation of the disc algebra. Recall that given a Hilbert module $(\hil,T)$, a bounded linear map
$$
\delta:A(\D)\to B(\hil)
$$ is called
a \textit{derivation} if it satisfies
$$
\delta(fg)=f(T)\delta(g)+\delta(f)g(T)
$$
for every $f,g\in A(\D)$. A derivation is \textit{inner} if
there exists $\Delta\in B(\hil)$ such that
$$
\delta(f)=f(T)\Delta-\Delta f(T)
$$
for every $f\in A(\D)$. The connection between derivations and extension groups is realized as follows. Let $X\in B(\hil)$ and set 
$$
R=\left(
\begin{array}{cc}
T & X\\
0 & T
\end{array}
\right).
$$
For every polynomial $p$, we have that
$$
p(R)=\left(
\begin{array}{cc}
p(T) & \delta_X(p)\\
0 & p(T)
\end{array}
\right)
$$
for some operator $\delta_X(p)$.
Then, the operator $R$ is polynomially bounded
if and only if the map $$p\mapsto\delta_X(p)$$ extends to a derivation on $A(\D)$. Moreover, $[X]=0$ in $\Ext_{A(\D)}^1(T,T)$ if and only if $\delta_X$ is an inner derivation.

It is an interesting and non-trivial issue to determine the modules $T$ for which every derivation is inner, or equivalently for which the group $\Ext_{A(\D)}^1(T,T)$ is trivial. In relation to this problem, in Section 3 we investigate the condition
$$
\Ext_{A(\D)}^1(T,T)=0
$$
and its consequences on the operator $T$, and establish the following theorem. 
\begin{theorem}\label{t-structintro}
Let $(\hil,T)$ be a Hilbert module such that $\hil$ is infinite dimensional and
$$
\Ext_{A(\D)}^1(T,T)=0.
$$
Then, the subspaces $\ker T$ and $\ker T^*$ are orthogonal,  and  the subspaces 
$$
\{h\in \hil\ominus \ker T: Th\in \hil\ominus \ker T\}
$$
and
$$
\{h\in \hil\ominus \ker T^*: T^*h\in \hil\ominus \ker T^*\}
$$
are infinite dimensional.
\end{theorem}
A natural strengthening of this result would read as follows: if $\Ext_{A(\D)}^1(T,T)=0$, then $T$ has no eigenvalues inside the unit disc. We verify this in the special cases of normal operators in Section 3 (Theorem \ref{t-normal}), and of matrices and $C_0$ contractions in Section 4 (Lemma \ref{l-matrices} and Theorem \ref{t-C0} respectively).

\emph{Acknowledgements} The author is grateful to the referee for his careful reading of the paper which helped improve the exposition.

\section{Spectral properties and the vanishing of extension groups}
Let $(\hil,T)$ be a Hilbert module. Since the powers of $T$ are uniformly bounded, it is a trivial consequence of the spectral radius formula that $\sigma(T)\subset \ol{\D}$. The aim of
this section is to investigate the relation between the spectrum of $T$ and the vanishing of the group $\Ext_{A(\D)}^1(T,X)$ where $X$ is some fixed module.

Recall now that the left (respectively right) spectrum of an element $a$ in a unital Banach algebra is the set of complex numbers $\lambda$ with the property that $a-\lambda$ is not left (respectively right) invertible. These sets are denoted by $\sigma_l(a)$ and $\sigma_r(a)$ respectively.
If we are dealing with a bounded operator $T$ on some Banach space, then it is well-known that $\sigma_l(T)$ coincides with the set of complex numbers
$\lambda$ with the property that $T-\lambda I$ is not bounded below, while $\sigma_r(T)$ coincides with the set of complex numbers $\lambda$ with the property that $T-\lambda I$ is not surjective.

We first reformulate a result of \cite{davis1974} which yields a sufficient spectral condition for the vanishing of an extension group.

\begin{theorem}\label{t-lrspec}
Let $(\hil_1,T_1)$ and $(\hil_2,T_2)$ be Hilbert modules. Then, 
$$
\Ext^1_{A(\D)}(T_2,T_1)=0
$$
if the sets $\sigma_l(T_2)$ and $\sigma_r(T_1)$ are disjoint.
\end{theorem}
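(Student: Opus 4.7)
The plan is to deduce the statement from Theorem~\ref{t-extchar} together with the Sylvester-equation solvability theorem of Davis and Rosenthal~\cite{davis1974}. By Theorem~\ref{t-extchar}, the vanishing of $\Ext^1_{A(\D)}(T_2,T_1)$ is equivalent to the inclusion $\A\subseteq \mathcal{J}$: every operator $X:\hil_2\to\hil_1$ for which the triangular operator is polynomially bounded must be expressible as $T_1L-LT_2$ for some bounded $L:\hil_2\to\hil_1$.

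Rather than exploit the polynomial-boundedness condition that defines $\A$, I would prove the sharper statement that \emph{every} bounded operator $X:\hil_2\to\hil_1$ admits such a representation, which is precisely the content of the result being reformulated. Introduce the bounded linear map
$$
\tau:B(\hil_2,\hil_1)\to B(\hil_2,\hil_1),\qquad \tau(L)=T_1L-LT_2.
$$
Under the disjointness hypothesis $\sigma_l(T_2)\cap\sigma_r(T_1)=\emptyset$, at each $\lambda$ away from $\sigma_l(T_2)$ the operator $T_2-\lambda$ admits a bounded left inverse, and at each $\lambda$ away from $\sigma_r(T_1)$ the operator $T_1-\lambda$ admits a bounded right inverse. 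The Davis--Rosenthal argument combines these one-sided inverses in an operator-valued Cauchy integral along a contour that separates the two one-sided spectra in an appropriate sense, producing a bounded $L$ with $\tau(L)=X$. Hence $\tau$ is surjective; in particular $\A\subseteq \ran \tau=\mathcal{J}$, and so $\Ext^1_{A(\D)}(T_2,T_1)=\A/\mathcal{J}=0$.

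The main obstacle is essentially bookkeeping: one must verify that the version of the Davis--Rosenthal theorem being cited couples the correct one-sided spectra to the correct factors, i.e.\ that $\sigma_l$ is attached to $T_2$ and $\sigma_r$ to $T_1$, and not the reverse pairing (which would instead correspond to solvability of the transposed equation $T_1^*M-MT_2^*=X^*$ on $B(\hil_1,\hil_2)$). Beyond this, no further analytic work is required: the present theorem is really a translation of the solvability statement in~\cite{davis1974} into the module-theoretic language provided by Theorem~\ref{t-extchar}.
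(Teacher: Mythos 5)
Your proposal is correct and follows exactly the paper's route: invoke the Davis--Rosenthal theorem from \cite{davis1974} to conclude that $L\mapsto T_1L-LT_2$ is surjective on $B(\hil_2,\hil_1)$ when $\sigma_l(T_2)\cap\sigma_r(T_1)=\emptyset$, so that $\mathcal{J}=B(\hil_2,\hil_1)\supseteq\A$, and then apply Theorem \ref{t-extchar}. The paper's proof is just this two-line reduction, so your additional commentary on the contour-integral mechanism and on matching the one-sided spectra to the correct factors is accurate but not needed beyond the citation.
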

\begin{proof}
It follows at once from Theorem 5 of \cite{davis1974} that the map
$$
B(\hil_2,\hil_1)\to B(\hil_2,\hil_1)
$$
$$
L\mapsto T_1L-LT_2
$$
is surjective under our assumption. The conclusion is then an immediate consequence of Theorem \ref{t-extchar}.
\end{proof}

Before giving an easy consequence of Theorem \ref{t-lrspec}, we need some notation. Let $\E$ be a separable Hilbert space and let $H^2(\E)$ be the Hardy space of (weakly) holomorphic $\E$-valued functions on the unit disc with square summable Taylor coefficients at the origin. Let $S=S_{\E}$ the unilateral shift on $H^2(\E)$ which acts by multiplication by the variable.

\begin{corollary}\label{c-projspecshift}
Let $(\hil,T)$ be a Hilbert module such that $\sigma(T)\subset \D$. Then, $ \Ext^1_{A(\D)}(S_{\E},T)=0. $
\end{corollary}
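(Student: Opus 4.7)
The plan is to invoke Theorem \ref{t-lrspec} directly with $T_2 = S_{\E}$ and $T_1 = T$. This reduces the task to showing that $\sigma_l(S_{\E})$ and $\sigma_r(T)$ are disjoint subsets of $\C$.

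First I would pin down $\sigma_l(S_{\E})$. Since $S_{\E}$ is an isometry on $H^2(\E)$, for any $\lambda\in\D$ the triangle inequality gives
$$
\|(S_{\E}-\lambda I)f\|\geq \|S_{\E}f\|-|\lambda|\|f\|=(1-|\lambda|)\|f\|,
$$
so $S_{\E}-\lambda I$ is bounded below and hence $\lambda\notin \sigma_l(S_{\E})$. On the other hand, the spectrum of $S_{\E}$ is contained in $\ol{\D}$, and $\sigma_l(S_{\E})$ is a closed set containing the topological boundary of the spectrum; combined with the above, this forces $\sigma_l(S_{\E})\subset \T$.

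Next I would observe that by assumption $\sigma(T)\subset \D$, and since $\sigma_r(T)\subset \sigma(T)$ we get $\sigma_r(T)\subset \D$. Therefore
$$
\sigma_l(S_{\E})\cap \sigma_r(T)\subset \T\cap \D=\emptyset,
$$
and Theorem \ref{t-lrspec} yields $\Ext^1_{A(\D)}(S_{\E},T)=0$.

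There is essentially no obstacle: the entire argument is a one-line check of disjointness of spectra once the well-known identification $\sigma_l(S_{\E})\subset \T$ is recorded. The only subtlety worth a brief justification is the boundedness below of $S_{\E}-\lambda I$ for $\lambda\in\D$, which follows immediately from the fact that $S_{\E}$ is an isometry.
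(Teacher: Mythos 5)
Your proof is correct and follows exactly the paper's route: apply Theorem \ref{t-lrspec} with $T_2=S_{\E}$, $T_1=T$, using that $\sigma_l(S_{\E})\subset\T$ (the classical fact the paper cites, which you justify via the isometry estimate) while $\sigma_r(T)\subset\sigma(T)\subset\D$. No gaps; your spelled-out verification of the left spectrum of the shift is a welcome but inessential addition.
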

\begin{proof}
This follows immediately from Theorem \ref{t-lrspec} and from the classical fact that the left spectrum of the unitaleral shift is the unit circle $\T$.
\end{proof}
This result contrasts nicely with a result of Carlson and Clark (Corollary 3.4.2 of \cite{carlson1994}) which says that if $\sigma(T)\subset \D$, then 
the group $\Ext_{A(\D)}^1(T,S)$ is isomorphic to $\hil$, where $S$ denotes the shift of multiplicity one.

The remainder of the section is devoted to finding conditions on the spectrum of a module that are necessary for the vanishing of certain extension groups. We first need an auxiliary result which will simplify some proofs. For $\lambda\in \D$, we set
$$
\phi_{\lambda}(z)=\frac{z-\lambda}{1-\ol{\lambda}z}.
$$
If $(\hil,T)$ is a Hilbert module, then the operator $\phi_{\lambda}(T)$ is bounded since $\sigma(T)\subset \ol{\D}$ as was observed at the beginning of the section.

\begin{lemma}\label{l-mobius}
Let $(\hil_1,T_1)$ and $(\hil_2,T_2)$ be Hilbert modules such that
$$
\Ext_{A(\D)}^1(T_2,T_1)=0.
$$
Then
$$
\Ext_{A(\D)}^1(\phi_{\lambda}(T_2),\phi_{\lambda}(T_1))=0
$$
for every $\lambda\in \D$.
\end{lemma}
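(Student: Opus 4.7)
The plan is to exploit the fact that $\phi_\lambda$ is a disc automorphism with inverse $\phi_{-\lambda}$, both of which lie in $A(\D)$ (in fact, since $|\lambda|<1$, they extend analytically to a neighborhood of $\ol{\D}$). The idea is to ``undo'' the composition with $\phi_\lambda$ at the level of the block upper-triangular operator in Theorem \ref{t-extchar}.

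Fix $\lambda\in \D$ and a representative $X:\hil_2\to\hil_1$ of a class in $\Ext^1_{A(\D)}(\phi_\lambda(T_2),\phi_\lambda(T_1))$, so that
\[
R=\begin{pmatrix} \phi_\lambda(T_1) & X \\ 0 & \phi_\lambda(T_2) \end{pmatrix}
\]
is polynomially bounded. Since $\phi_{-\lambda}\in A(\D)$, the operator $\phi_{-\lambda}(R)$ is well-defined through the $A(\D)$-functional calculus of $R$. The key (routine) observations are that this calculus is multiplicative, preserves the upper-triangular block form on the diagonal, and satisfies $\phi_{-\lambda}\circ \phi_\lambda =\id$ on $\ol{\D}$, so
\[
\phi_{-\lambda}(R)=\begin{pmatrix} T_1 & Y \\ 0 & T_2 \end{pmatrix}
\]
for some bounded $Y:\hil_2\to \hil_1$.

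Next, I would verify that $\phi_{-\lambda}(R)$ is itself polynomially bounded: for any polynomial $p$, the function $p\circ\phi_{-\lambda}$ lies in $A(\D)$ with $\|p\circ\phi_{-\lambda}\|_\infty\leq \|p\|_\infty$ because $\phi_{-\lambda}(\ol{\D})\subseteq \ol{\D}$, and then $p(\phi_{-\lambda}(R))=(p\circ\phi_{-\lambda})(R)$ has norm at most $C\|p\|_\infty$. Thus by Theorem \ref{t-extchar}, $Y$ represents a class in $\Ext^1_{A(\D)}(T_2,T_1)$, which vanishes by hypothesis. Therefore there exists a bounded operator $L:\hil_2\to\hil_1$ with $Y=T_1L-LT_2$, equivalently
\[
\phi_{-\lambda}(R) \;=\; \begin{pmatrix} I & L \\ 0 & I \end{pmatrix}\begin{pmatrix} T_1 & 0 \\ 0 & T_2 \end{pmatrix}\begin{pmatrix} I & -L \\ 0 & I \end{pmatrix}.
\]

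Finally, applying the functional calculus transformation $\phi_\lambda$ to both sides, using that the calculus is compatible with similarities and that $\phi_\lambda(\phi_{-\lambda}(R))=R$, I obtain that $R$ is similar to $\phi_\lambda(T_1)\oplus \phi_\lambda(T_2)$. By the standard fact recalled after Theorem \ref{t-extchar}, this gives $[X]=0$, which is what we wanted. The main technical obstacle is the bookkeeping around the $A(\D)$-functional calculus for polynomially bounded operators: verifying that it respects the block upper-triangular form, composition of symbols, and conjugation by block-unitriangular similarities. All of these are standard but must be handled carefully since $R$ is not assumed to be a contraction.
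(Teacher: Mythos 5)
Your proposal is correct and follows essentially the same route as the paper's proof: pass to $\phi_{-\lambda}(R)$ via the functional calculus, note it is polynomially bounded because $\phi_{-\lambda}$ is a disc automorphism, invoke the vanishing of $\Ext^1_{A(\D)}(T_2,T_1)$ to split it, and transport the similarity back through $\phi_{\lambda}$. The only cosmetic difference is that you write the splitting similarity explicitly as a block-unitriangular operator, whereas the paper just takes an abstract invertible $W$; the content is identical.
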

\begin{proof}
Assume that the operator
$$
R=\left( \begin{array}{cc}
\phi_{\lambda}(T_1) & X\\
0 & \phi_{\lambda}(T_2)
\end{array}\right)
$$
is polynomially bounded, so that there exists a constant $C>0$ such that 
$$
\|f(R)\|\leq C\|f\|_{\infty}
$$
for every $f\in A(\D)$. Using the fact that $\phi_{-\lambda}\circ \phi_{\lambda}(z)=z$ for every $z\in \D$, we find
$$
\phi_{-\lambda}(R)=\left( \begin{array}{cc}
T_1 & Y\\
0 & T_2
\end{array}\right)
$$
for some operator $Y$. But since $\phi_{-\lambda}$ is an automorphism of the unit disc, we see that $\phi_{-\lambda}(R)$ is also polynomially bounded. Indeed, if $f\in A(\D)$ then we have
\begin{align*}
\|f(\phi_{-\lambda}(R))\|&=\|(f\circ \phi_{-\lambda})(R)\|\\
&\leq C\|f\circ \phi_{-\lambda} \|_{\infty}\\
&= C \|f\|_{\infty}.
\end{align*}
Now, $\Ext_{A(\D)}^1(T_2,T_1)$ is assumed to be trivial, so there exists an invertible operator $W$
with the property that
$$
W\phi_{-\lambda}(R)W^{-1}=T_1\oplus T_2
$$
whence
$$
WRW^{-1}=\phi_{\lambda}(T_1)\oplus \phi_{\lambda}(T_2)
$$
and the element $[X]$ is trivial in
$\Ext_{A(\D)}^1(\phi_{\lambda}(T_2),\phi_{\lambda}(T_1))$.
\end{proof}

Another preliminary lemma is required. Its proof can be found in  \cite{clouatreMODULE1}.

\begin{lemma}\label{l-finitebounded}
Let $(\hil_1, T_1)$ and $(\hil_2, T_2)$ be Hilbert modules. Let
$X:\hil_2 \to \hil_1$ be a bounded operator such that
$T_1^NXT_2^N=0$ for some integer $N\geq 0$. Then, the operator
$R:\hil_1\oplus \hil_2\to \hil_1\oplus \hil_2$ defined as
$$
R=\left(
\begin{array}{cc}
T_1 & X\\
0 & T_2
\end{array}
\right)
$$
is polynomially bounded.
\end{lemma}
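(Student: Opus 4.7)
The plan is to compute $p(R)$ explicitly for a polynomial $p(z) = \sum_{k \geq 0} a_k z^k$, and to control the norm of the resulting $2 \times 2$ block-upper-triangular operator. Since $T_1$ and $T_2$ are themselves polynomially bounded, the diagonal entries $p(T_1)$ and $p(T_2)$ of $p(R)$ already satisfy the estimate $\|p(T_i)\| \le C_i \|p\|_\infty$, so the entire argument reduces to bounding the off-diagonal entry
$$
\delta_X(p) = \sum_{k\geq 1} a_k \sum_{j=0}^{k-1} T_1^j X T_2^{k-1-j}
$$
by a constant multiple of $\|p\|_\infty$.

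Next, I would exploit the hypothesis $T_1^N X T_2^N = 0$ to eliminate every summand with $j \geq N$ \emph{and} $k-1-j \geq N$ simultaneously (since such a term factors through $T_1^N X T_2^N$). The surviving pairs $(k,j)$ split into two disjoint families, indexed respectively by $j \in \{0,\dots,N-1\}$ and by $i := k-1-j \in \{0,\dots,N-1\}$ with $j \geq N$. Collecting terms then produces the finite expansion
$$
\delta_X(p) = \sum_{j=0}^{N-1} T_1^j X\, p_j(T_2) \;+\; \sum_{i=0}^{N-1} \tilde p_i(T_1)\, X\, T_2^i,
$$
where $p_j(z) = \sum_{k \geq j+1} a_k z^{k-1-j}$ and $\tilde p_i(z) = \sum_{k \geq N+i+1} a_k z^{k-1-i}$ are polynomials obtained from $p$ by truncating and shifting its Taylor expansion.

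The final step is to estimate the sup norms of these derived polynomials. Multiplying by an appropriate monomial recovers a truncation of $p$: since $z^{j+1} p_j(z) = p(z) - \sum_{k=0}^{j} a_k z^k$, the maximum principle gives $|p_j(z)| \leq \|p\|_\infty + \sum_{k=0}^{j}|a_k|$ on the unit circle, and the Cauchy bound $|a_k| \leq \|p\|_\infty$ then yields $\|p_j\|_\infty \leq (j+2)\|p\|_\infty$. An identical computation gives $\|\tilde p_i\|_\infty \leq (N+i+2)\|p\|_\infty$. Applying polynomial boundedness of $T_1$ and $T_2$ to $p_j(T_2)$ and $\tilde p_i(T_1)$, and using that the powers $\|T_1^j\|, \|T_2^i\|$ for $0 \leq i,j \leq N-1$ are uniformly bounded, one obtains a constant $C$ depending on $N$, $X$, $T_1$, $T_2$ with $\|\delta_X(p)\| \leq C\|p\|_\infty$, which combines with the diagonal bounds to establish the polynomial boundedness of $R$.

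The main obstacle, as I see it, is the bookkeeping in the second step: one has to verify that splitting the surviving indices into the two families above is both exhaustive and disjoint, and that after reindexing via $i = k-1-j$ the second family really becomes the product of a polynomial in $T_1$ with $X T_2^i$. Once this rearrangement is carried out cleanly, the sup-norm estimate via $|a_k|\leq \|p\|_\infty$ is essentially mechanical.
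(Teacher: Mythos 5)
Your proof is correct. There is no in-paper argument to compare it against --- the paper defers the proof of this lemma to the reference \cite{clouatreMODULE1} --- but your computation is sound at every step: the formula for $\delta_X(p)$, the observation that $T_1^jXT_2^{k-1-j}=0$ whenever $j\ge N$ and $k-1-j\ge N$, the partition of the surviving indices into the two disjoint, exhaustive families ($j<N$; respectively $j\ge N$ and $i=k-1-j<N$), and the reindexing of the second family as $\sum_{i=0}^{N-1}\tilde p_i(T_1)XT_2^i$ all check out. It is worth noting how your argument relates to the machinery the paper does develop in Proposition \ref{p-strictcontr}: your polynomials $p_j$ are exactly the iterated difference quotients $D^{j+1}p$ appearing there. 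Where you bound $\|p_j\|_\infty$ crudely by $(j+2)\|p\|_\infty$ via the maximum principle and Cauchy's estimate $|a_k|\le\|p\|_\infty$, the proposition requires the sharper logarithmic bound $\|D^n\|\le M(1+\log n)$, because its sum runs over all $j$ up to $\deg p$ and is rescued only by the geometric decay of $\|T_1\|^j$. In your setting both sums have length $N$ independent of $p$, so the crude linear bound suffices --- provided one also invokes, as you implicitly do, that polynomially bounded operators are power bounded, so that $\|T_1^j\|$ and $\|T_2^i\|$ for $0\le i,j\le N-1$ are uniformly controlled. Combined with the diagonal estimates $\|p(T_i)\|\le C_i\|p\|_\infty$, this gives the required bound on $\|p(R)\|$.
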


We now come to the first main result of this section.

\begin{theorem}\label{t-projspec}
Let $\lambda\in \D$, let $(\hil,T)$ be a Hilbert module and let $P$ be the orthogonal projection of $\hil$ onto $\ker(T-\lambda)$. If
$\ker(T-\lambda)\neq 0$, then
$$
\Ext^1_{A(\D)}(T,(I-P)T^*(I-P))\neq 0.
$$
\end{theorem}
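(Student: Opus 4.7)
The plan is to reduce to the case $\lambda = 0$ via Lemma \ref{l-mobius} and then exhibit an explicit rank-one operator representing a nontrivial extension class. For the reduction, note that $\phi_\lambda(\lambda) = 0$ combined with $\phi_\lambda$ being a Möbius automorphism of $\D$ yields $\ker \phi_\lambda(T) = \ker(T - \lambda I)$, so $P$ is also the orthogonal projection onto $\ker \phi_\lambda(T)$. Via the contrapositive of Lemma \ref{l-mobius}, together with a careful analysis of how the operator $(I-P)T^*(I-P)$ transforms under $\phi_\lambda$, this would allow us to assume $\lambda = 0$ for the remainder of the argument.

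Assuming $\lambda = 0$, pick a unit vector $e \in \ker T$ and define the rank-one operator $X$ on $\hil$ by $X h = \langle h, e \rangle e$. Writing $T_1 = (I-P) T^*(I-P)$, the fact that $(I-P) e = 0$ forces $T_1 e = 0$, and hence $T_1 X T^N = 0$ for every $N \geq 0$. Lemma \ref{l-finitebounded} (with $N=1$) then guarantees that the block upper-triangular operator with $T_1$ and $T$ on the diagonal and $X$ in the corner is polynomially bounded, so $X \in \A$. Suppose for contradiction that $[X] = 0$; then there would exist a bounded operator $L$ with $T_1 L - L T = X$. Evaluating at $e$ and using $T e = 0$ yields $T_1 L e = X e = e$, which is impossible because $\ran T_1 \subseteq \ran(I - P) = (\ker T)^\perp$ is orthogonal to the nonzero vector $e \in \ker T$.

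The main technical obstacle will be executing the Möbius reduction cleanly, since $(I-P)T^*(I-P)$ does not transform directly under $\phi_\lambda$; one must verify that the discrepancy between $\phi_\lambda((I-P)T^*(I-P))$ and $(I-P)\phi_\lambda(T)^*(I-P)$ does not compromise Ext-nontriviality when transferring the conclusion back to the original module. An alternative would be to adapt the rank-one construction directly for $\lambda \neq 0$, iterating the identity $T_1^n L - L T^n = \sum_{k=0}^{n-1} T_1^k X T^{n-1-k}$ at the eigenvector $e$ to extract enough constraints on $L e$ to force a contradiction using the structure of $T_1$ restricted to $(\ker(T-\lambda I))^\perp$.
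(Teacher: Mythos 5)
Your argument for the case $\lambda=0$ is correct, and it is essentially the paper's own argument seen through the duality $[X]\mapsto [X^*]$: the paper exhibits the nonzero class $[P]$ in $\Ext^1_{A(\D)}((I-P)T(I-P),T^*)$ using Lemma \ref{l-finitebounded}, whereas you exhibit the rank-one class $[e\otimes e]$ directly in $\Ext^1_{A(\D)}(T,(I-P)T^*(I-P))$; in both cases the contradiction comes from sandwiching the putative commutator identity between $\ker T$ and its orthogonal complement.

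The genuine gap is exactly the one you flag yourself, and it is not a deferrable technicality: the M\"obius reduction fails as you have set it up. Lemma \ref{l-mobius} applies the \emph{same} automorphism to both arguments of the extension group. If you run your $\lambda=0$ argument on $\phi_\lambda(T)$ and try to transfer back, the second slot is $(I-P)\phi_\lambda(T)^*(I-P)=(I-P)\phi_{\ol{\lambda}}(T^*)(I-P)=\phi_{\ol{\lambda}}\bigl((I-P)T^*(I-P)\bigr)$ (the last equality because $\ran(I-P)$ is invariant for $T^*$), while the first slot is $\phi_{\lambda}(T)$; the two slots thus carry $\phi_\lambda$ and $\phi_{\ol{\lambda}}$ respectively, which agree only when $\lambda$ is real, so the lemma does not apply. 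Your proposed fallback of running the rank-one computation directly at an eigenvector $e$ with $Te=\lambda e$ also does not close the gap: with $T_1=(I-P)T^*(I-P)$ the identity $T_1Le-LTe=Xe$ becomes $(T_1-\lambda)Le=e$, which is solvable (take $Le=-\lambda^{-1}e$ and use $T_1e=0$), so evaluation at $e$ alone yields no contradiction when $\lambda\neq 0$. The paper's way around this is to change \emph{which} operator gets compressed: it shows $[P]\neq 0$ in $\Ext^1_{A(\D)}(T_\lambda,\phi_{\ol{\lambda}}(T^*))$ where $T_\lambda=(I-P)\phi_{\ol{\lambda}}(T)(I-P)$, identifies $T_\lambda$ with $\phi_{\ol{\lambda}}\bigl((I-P)T(I-P)\bigr)$ using the fact that $\hil\ominus\ker(T-\lambda)$ is co-invariant for $T$ (an identity that itself must be read on the co-invariant subspace --- this is precisely the ``careful analysis'' you anticipated), so that both slots now carry the same map $\phi_{\ol{\lambda}}$, strips it off with Lemma \ref{l-mobius}, and only at the very end dualizes via $[X]\mapsto[X^*]$ to arrive at $\Ext^1_{A(\D)}(T,(I-P)T^*(I-P))$. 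That reordering --- compress $T$ rather than $T^*$, apply the M\"obius lemma, and dualize last --- is the idea missing from your proposal.
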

\begin{proof}
Let 
$$
T_{\lambda}=(I-P)\phi_{\ol{\lambda}}(T)(I-P).
$$
Since $\ker \phi_{\lambda}(T)$ is clearly invariant for $T$ we have that
\begin{equation}\label{e-Tlambda}
T_{\lambda}=\phi_{\ol{\lambda}}((I-P)T(I-P)).
\end{equation}
Moreover,
$$
f(T_{\lambda})=(I-P)(f\circ\phi_{\ol{\lambda}})(T)(I-P)
$$
for every $f\in A(\D)$ and thus $T_{\lambda}$ is polynomially bounded.
The operator
$$
R=\left(
\begin{array}{cc}
\phi_{\ol{\lambda}}(T^*)& P\\
0& T_\lambda
\end{array}
\right)
$$
acting on $\hil\oplus \hil$ is also seen to be polynomially
bounded in view of Lemma \ref{l-finitebounded} and of the fact that $P T_{\lambda}=0$. We now proceed to show that
$[P]$ gives rise to a non-trivial element of
$$
\Ext^1_{A(\D)}(T_{\lambda},\phi_{\ol{\lambda}}(T^*)).
$$
Assume on the contrary that there exists $L\in B(\hil)$ such that
$$
P=\phi_{\ol{\lambda}}(T^*)L-LT_{\lambda}.
$$
Note that 
$
T_{\lambda}P=0
$
and
\begin{align*}
P\phi_{\ol{\lambda}}(T^*)&=((\phi_{\ol{\lambda}}(T^*))^*P )^*\\
&=(\phi_{\lambda}(T)P
)^*\\
&=0,
\end{align*}
hence
\begin{align*}
P&=P^3\\
&=P(\phi_{\ol{\lambda}}(T^*)L-LT_{\lambda})P\\
&=0
\end{align*}
which is equivalent to
$
\ker (T-\lambda)
$
being trivial, contrary to assumption. Therefore,
$$
\Ext^1_{A(\D)}(T_{\lambda},\phi_{\ol{\lambda}}(T^*))\neq 0.
$$
Note now that equation (\ref{e-Tlambda}) implies that
$$
\Ext^1_{A(\D)}(\phi_{\ol{\lambda}}((I-P)T(I-P)),\phi_{\ol{\lambda}}(T^*))=
\Ext^1_{A(\D)}(T_{\lambda},\phi_{\ol{\lambda}}(T^*))\neq 0.
$$
Lemma \ref{l-mobius} therefore guarantees that
$$
\Ext^1_{A(\D)}((I-P)T(I-P),T^*)\neq 0
$$
which is equivalent to
$$
\Ext^1_{A(\D)}(T,(I-P)T^*(I-P))\neq 0
$$
and the proof is complete.
\end{proof}
Notice that this theorem offers a simple necessary condition for a Hilbert module $(\hil,T)$ to be projective, namely that the point spectrum $\sigma_p(T)$ (the set of eigenvalues of $T$)
be contained in the unit circle $\T$. 
The following is the second main result of this section.

\begin{theorem}\label{t-lspec}
Let $\lambda\in \D$  and let $(\hil,T)$ be a Hilbert module. Then,
$$
\Ext^1_{A(\D)}(T,\lambda)=0
$$
if and only if $\lambda$ does not belong to $\sigma_l(T)$.
\end{theorem}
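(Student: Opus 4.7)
The plan is to establish the two implications separately via the concrete description of $\Ext^1_{A(\D)}(T,\lambda)$ coming from Theorem \ref{t-extchar}.

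The forward implication should be immediate from Theorem \ref{t-lrspec}: the one-dimensional module $\lambda$ (acting on $\C$ by multiplication) has $\sigma_r(\lambda)=\{\lambda\}$, so $\lambda\notin \sigma_l(T)$ means $\sigma_l(T)\cap \sigma_r(\lambda)=\emptyset$ and Theorem \ref{t-lrspec} gives the vanishing of the extension group at once.

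For the converse, I would first translate the problem into linear algebra. Any bounded $X:\hil\to\C$ can be written as $Xh=\langle h,y\rangle$ for a unique $y\in\hil$, and likewise any $L:\hil\to\C$ as $Lh=\langle h,z\rangle$. A short calculation then identifies the condition $X=\lambda L-LT$ with $y=-(T^*-\ol{\lambda})z$, so modulo this identification $[X]=0$ is equivalent to $y\in \ran(T^*-\ol{\lambda})$. Next I would verify that \emph{every} $y\in\hil$ produces a polynomially bounded matrix
$$
R=\begin{pmatrix}\lambda & X\\ 0 & T\end{pmatrix};
$$
the key observation is that an induction on $n$ yields
$$
p(R)=\begin{pmatrix} p(\lambda) & X\,p_\lambda(T)\\ 0 & p(T)\end{pmatrix},\qquad p_\lambda(z)=\frac{p(z)-p(\lambda)}{z-\lambda},
$$
while the divided difference $p_\lambda$ satisfies $\|p_\lambda\|_\infty\leq 2\|p\|_\infty/(1-|\lambda|)$ on $\ol{\D}$, since $|p(z)-p(\lambda)|\leq 2\|p\|_\infty$ and $|z-\lambda|\geq 1-|\lambda|$ on $\T$. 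Combining this with the polynomial boundedness of $T$ gives a uniform bound on $\|p(R)\|$ in terms of $\|p\|_\infty$, so every such $X$ lies in $\A$.

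With those two ingredients in hand the converse is immediate: $\lambda\in \sigma_l(T)$ is equivalent to $T-\lambda$ not being bounded below, which in turn is equivalent to $T^*-\ol{\lambda}$ failing to be surjective; choosing any $y\in \hil\setminus \ran(T^*-\ol{\lambda})$ and setting $Xh=\langle h,y\rangle$ produces a nonzero class in $\Ext^1_{A(\D)}(T,\lambda)$. I do not expect a serious obstacle in this argument: the only analytically non-trivial step is the divided-difference estimate for $p_\lambda$, and that is routine once one unwinds the definitions.
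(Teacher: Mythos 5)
Your argument is correct, and it reaches the conclusion by a genuinely different route from the paper. The paper first applies Lemma \ref{l-mobius} to reduce to the case $\lambda=0$, gets polynomial boundedness of the single relevant extension operator from Lemma \ref{l-finitebounded}, and then reads off a left inverse from the relation $I=-L\phi_{\lambda}(T)$ together with the factorization $T-\lambda=\phi_{\lambda}(T)(1-\ol{\lambda}T)$; like you, it handles the ``if'' direction by Theorem \ref{t-lrspec}. You instead work directly at $\lambda$, identify $B(\hil,\C)$ with $\hil$, check that the coboundary condition $X=\lambda L-LT$ becomes $y\in\ran(T^*-\ol{\lambda})$, and show via the divided-difference estimate $\|p_\lambda\|_\infty\leq 2\|p\|_\infty/(1-|\lambda|)$ that $\A$ is all of $B(\hil,\C)$ (this last point could also be quoted from Proposition \ref{p-strictcontr}, since $|\lambda|<1$). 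Your computations are all correct: the formula for $p(R)$, the bound on $p_\lambda$ via the maximum principle, and the equivalence ``$T-\lambda$ bounded below $\iff$ $T^*-\ol{\lambda}$ surjective'' are standard. What your route buys is an explicit identification $\Ext^1_{A(\D)}(T,\lambda)\cong\hil/\ran(T^*-\ol{\lambda})$, which is strictly more information than the vanishing statement; what the paper's route buys is brevity and reuse of lemmas already in place, with no new estimates. One small caveat: you read the module ``$\lambda$'' as the one-dimensional module on $\C$, whereas the paper's proof implicitly takes it to be $\lambda I$ acting on $\hil$; the two interpretations give different groups but the same vanishing criterion (both are equivalent to left invertibility of $T-\lambda$), so nothing is lost.
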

\begin{proof}
Assume that
$$
\Ext^1_{A(\D)}(T,\lambda)=0.
$$
The operator
$$
R=\left(
\begin{array}{cc}
0  & I\\
0 & \phi_{\lambda}(T)
\end{array}
\right)
$$
acting on $\hil\oplus \hil$ is easily seen to be polynomially
bounded by virtue of Lemma \ref{l-finitebounded}. Now, Lemma \ref{l-mobius} implies that
$$
\Ext^1_{A(\D)}(\phi_{\lambda}(T),0)=0
$$
and thus we can find $L\in B(\hil)$ such that
$$
I=-L\phi_{\lambda}(T).
$$
Consequently
$$
T-\lambda=\phi_{\lambda}(T)(1-\ol{\lambda}T)
$$
is left invertible and $\lambda\notin \sigma_l(T)$. The converse statement follows immediately from Theorem \ref{t-lrspec}.
\end{proof}

This theorem shows in particular that in order for a Hilbert
module $(\hil,T)$ to be projective, it must satisfy
$$
\sigma_p(T)\subset \sigma_l(T)\subset \T.
$$ Now, the reader might wonder about the relevance of Theorem \ref{t-projspec}
in view of the corresponding statement in Theorem \ref{t-lspec}:
the latter is much simpler to prove and has a more satisfactory
conclusion than the former, while the assumption might not look
stronger. However, the assumption that
$$
\Ext^1_{A(\D)}(T,\lambda)=0
$$
is indeed quite strong, and we proceed to illustrate why. The following proposition will be needed later as well.

\begin{proposition}\label{p-strictcontr}
Let $T_1\in B(\hil_1)$ and $T_2\in B(\hil_2)$ be Hilbert modules and assume that $\|T_1\|<1$. Then,
the operator
$$
R=\left(
\begin{array}{cc}
T_1 & X\\
0 & T_2
\end{array}
 \right)
$$
is polynomially bounded for every bounded operator $X:\hil_2 \to \hil_1$.
\end{proposition}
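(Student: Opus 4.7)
The plan is to bound $\|p(R)\|$ directly in terms of $\|p\|_{\infty}$ for every polynomial $p$. First, a routine induction on the degree of $p$ shows that if $p(z)=\sum_{n=0}^N a_n z^n$, then
$$
p(R)=\left(\begin{array}{cc} p(T_1) & \delta(p)\\ 0 & p(T_2)\end{array}\right),\qquad \delta(p)=\sum_{n=1}^N a_n \sum_{k=0}^{n-1} T_1^k X T_2^{n-1-k}.
$$
Exchanging the order of summation, I would rewrite
$$
\delta(p)=\sum_{k=0}^{N-1} T_1^k X q_k(T_2),
$$
where $q_k(z):=\sum_{m=0}^{N-k-1} a_{m+k+1}z^m$ is the polynomial satisfying $z^{k+1}q_k(z)=p(z)-p_k(z)$, with $p_k(z):=\sum_{n=0}^k a_n z^n$ denoting the Taylor truncation of $p$.

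The key estimate is a bound on $\|q_k\|_{\infty}$. Cauchy's inequality gives $|a_n|\le \|p\|_{\infty}$, so $\|p_k\|_{\infty}\le (k+1)\|p\|_{\infty}$; since $|z^{k+1}|=1$ on the unit circle, the maximum principle gives
$$
\|q_k\|_{\infty}=\sup_{|z|=1}|p(z)-p_k(z)|\le (k+2)\|p\|_{\infty}.
$$
Letting $r:=\|T_1\|<1$ and letting $C_2$ be a polynomial boundedness constant for $T_2$, one obtains
$$
\|\delta(p)\|\le \sum_{k=0}^{N-1} r^k \|X\| C_2 (k+2) \|p\|_{\infty} \le K \|X\| \|p\|_{\infty},
$$
where $K:=C_2 \sum_{k\ge 0}(k+2)r^k<\infty$. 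Combining this with the straightforward bounds $\|p(T_1)\|\le \|p\|_{\infty}/(1-r)$ and $\|p(T_2)\|\le C_2\|p\|_{\infty}$ yields $\|p(R)\|\le M\|p\|_{\infty}$ with $M$ independent of $p$, proving polynomial boundedness of $R$.

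The main obstacle is the estimate on $\|q_k\|_{\infty}$: the polynomials $q_k$ are built from the tails of the Taylor coefficients of $p$, and the crude Cauchy estimate only gives linear growth in $k$. The hypothesis $\|T_1\|<1$ is what rescues the argument, because it produces the geometric factor $r^k$ that absorbs the linear growth $(k+2)$ and makes the series $\sum (k+2)r^k$ converge. Without strict contractivity of $T_1$ no such direct bound on $\delta(p)$ is available by this elementary route, which is consistent with the fact that polynomial boundedness of a general $R$ of this form is a genuinely delicate matter.
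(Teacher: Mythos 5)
Your proof is correct, and it follows the same overall strategy as the paper: compute $\delta_X(p)$, exchange the order of summation to write it as $\sum_k T_1^k X q_k(T_2)$ with $q_k$ the $k$-th ``tail polynomial'' of $p$ (these are exactly the polynomials $\Pi_k$ of the paper), and then let the geometric decay of $\|T_1\|^k$ absorb the growth of $\|q_k\|_{\infty}$. Where you differ is in the key estimate on $\|q_k\|_{\infty}$. The paper identifies $q_k=D^{k+1}p$, where $D$ is the difference quotient operator, and invokes the classical Dirichlet-kernel bound $\|D^n\|\leq M(1+\log n)$ on $A(\D)$; you instead use the maximum principle to write $\|q_k\|_{\infty}=\sup_{|z|=1}|p(z)-p_k(z)|$ and bound the Taylor truncation $p_k$ crudely by Cauchy's coefficient estimates, getting $\|q_k\|_{\infty}\leq (k+2)\|p\|_{\infty}$. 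Your bound is linear in $k$ rather than logarithmic, but since $\sum_k (k+2)r^k$ converges for $r=\|T_1\|<1$ this costs nothing for the proposition as stated, and it buys you a completely elementary argument that avoids the Fourier-analytic input. The paper's sharper logarithmic estimate is the one that generalizes (it is the standard tool behind results of this type, e.g.\ the lemma of Petrovi\'c cited there, where one does not have a strict contraction to generate geometric decay), so the trade-off is simplicity here versus robustness of the method. All the individual steps you give check out: the identity $z^{k+1}q_k(z)=p(z)-p_k(z)$, the use of the maximum principle, and the final summation are all correct.
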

\begin{proof}
Let $p(z)=\sum_{k=0}^d a_k z^k$. Then, a quick calculation shows that
$$
p(R)=\left(
\begin{array}{cc}
p(T_1) & \delta_X(p)\\
0 & p(T_2)
\end{array}
 \right)
$$
where 
$$
\delta_X(p)=\sum_{k=1}^d a_k \sum_{j=0}^{k-1}T_1^{j}XT_2^{k-1-j}.
$$
Since $T_1$ and $T_2$ are polynomially bounded by assumption, to establish that $R$ is also polynomially bounded we need to show
that  that there exists a constant $C>0$ independent of $p$ such that $\left\|\delta_X(p)\right\|\leq
C\|p\|_{\infty}.$
We see that
\begin{align*}
\delta_X(p)&=\sum_{k=1}^d a_k \sum_{j=0}^{k-1}T_1^{j}XT_2^{k-1-j}\\
&=\sum_{j=0}^{d-1}T_1^j X\left(\sum_{k=j+1}^d a_k T_2^{k-1-j}\right)\\
&=\sum_{j=0}^{d-1} T_1^j X \Pi_{j}(T_2)  
\end{align*}
where
$$
\Pi_j(z)=\sum_{k=j+1}^d a_k z^{k-1-j}
$$
for every $0\leq j \leq d-1$. 

We denote by $D:A(\D)\to A(\D)$ the difference quotient operator defined as
$$
Df(z)=\frac{f(z)-f(0)}{z}
$$
for every $f\in A(\D)$. It is well-known that there exists a constant $M>0$ such that
$$
\|D^n\|\leq M(1+\log n)
$$
for every $n\geq 1$, but we sketch the argument for the convenience of the reader.
Given $f\in A(\D)$, one verifies inductively that
$$
D^n f=\frac{1}{z^n}\left( f(z)-\sum_{k=0}^{n-1}\frac{f^{(k)}(0)}{k!}z^k\right)
$$
for every $n\geq 1$ whence
$$
\|D^n f\|_{\infty}=\left\|f(z)-\sum_{k=0}^{n-1}\frac{f^{(k)}(0)}{k!}z^k
 \right\|_{\infty}.
$$
On the other hand, for every $\theta\in \R$ we see that
\begin{align*}
\sum_{k=0}^{n-1}\frac{f^{(k)}(0)}{k!}e^{ik\theta}
&=\frac{1}{2\pi}\int_{0}^{2\pi}f(e^{it})\left(\sum_{k=-(n-1)}^{n-1} e^{-ikt}e^{ik\theta}\right) dt\\
&=\frac{1}{2\pi}\int_{0}^{2\pi}f(e^{it})\Dir_{n-1}(\theta-t) dt
\end{align*}
where
$$
\Dir_n(t)=\sum_{k=-n}^n e^{ikt}
$$
is the Dirichlet kernel.
Therefore,
$$
\|D^n f\|_{\infty}\leq (1+\|\Dir_{n-1}\|_{1})\|f\|_\infty 
$$
for every $n\geq 1$.
It is a classical fact that $\|\Dir_n\|_{1}$ is comparable to $\log n$ as $n\to \infty$, so there exists a constant $M>0$ such that
$$
\|D^n\|\leq M(1+\log n) 
$$
for every $n\geq 1$.

Back to the problem at hand, we know that there exists a constant $C_2>0$ such that
$$
\|f(T_2)\|\leq C_2 \|f\|_{\infty}
$$
for every $f\in A(\D)$.
Using that $\Pi_j=D^{j+1} p$ for every $0\leq j\leq d-1$, we find
\begin{align*}
\|\delta_X(p)\|&\leq \sum_{j=0}^{d-1} \|T_1\|^j \|X\| \|\Pi_j(T_2)\| \\
&\leq C_2 \sum_{j=0}^{d-1}\|T_1\|^j\|X\|\|D^{j+1} p\|_{\infty}  \\
&\leq \left(\sum_{j=0}^{d-1}(1+\log (j+1)) \|T_1\|^j\right) C_2 M\|X\| \|p\|_{\infty}\\
&\leq \left(\sum_{j=0}^{\infty}(1+\log (j+1)) \|T_1\|^j\right)C_2 M\|X\| \|p\|_{\infty}
\end{align*}
and we are done since the series $$\sum_{j=0}^{\infty}(1+\log (j+1)) \|T_1\|^j$$ is convergent by assumption.
\end{proof}
We wish to mention that the general philosophy behind the calculations above can be extracted from the proof of Lemma 2.3 from \cite{petrovic1992}.

Going back to the discussion started before the proposition, let $(\hil,T)$ be a Hilbert module and $\lambda\in \D$.
If we write
$$
\Ext^1_{A(\D)}(T,\lambda)=\A/\mathcal{J}
$$
as in Theorem \ref{t-extchar}, then we
see that $\A$ is very large. Indeed, it is as large as possible since by Proposition \ref{p-strictcontr} it coincides with $B(\hil)$. Thus, the vanishing of the quotient $\A/\mathcal{J}$ is a rather strong condition. Moreover, the corresponding space $\A$ for $$\Ext^1_{A(\D)}(T,(I-P)T^*(I-P))$$ (see Theorem \ref{t-projspec}) is not as large a priori and thus the vanishing of that extension group appears to be a weaker condition. We feel this provides some intuition as to why the assumption of Theorem \ref{t-lspec} may indeed be stronger than that of Theorem \ref{t-projspec}, and that it explains in part the difference in strength of their conclusions.

\section{The derivation problem: a structure theorem}
The rest of the paper is devoted to the study of Hilbert
modules $(\hil,T)$ for which $\Ext_{A(\D)}^1(T,T)=0$. As was mentioned in the introduction, this is directly related with the derivation problem, and in fact this is one of the motivations for our investigation. First, we prove a structure theorem for such Hilbert modules. We focus here on the case where $\hil$ is infinite dimensional. The easier finite dimensional case is fully solved later on in Lemma \ref{l-matrices}.

\begin{theorem}\label{t-struct}
Let $(\hil,T)$ be a Hilbert module such that $\hil$ is infinite dimensional and
$$
\Ext_{A(\D)}^1(T,T)=0.
$$
Then, the subspaces $\ker T$ and $\ker T^*$ are orthogonal,  and  the subspaces 
$$
\{h\in \hil\ominus \ker T: Th\in \hil\ominus \ker T\}
$$
and
$$
\{h\in \hil\ominus \ker T^*: T^*h\in \hil\ominus \ker T^*\}
$$
are infinite dimensional.
%
%
%
%
\end{theorem}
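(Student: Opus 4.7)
The argument proceeds in three parts, matching the three conclusions.

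\textbf{Orthogonality.} Given $h_0 \in \ker T$ and $k_0 \in \ker T^*$, I would take the rank-one operator $X$ defined by $Xh = \langle h, k_0\rangle h_0$. Since $Th_0 = 0$ and $T^*k_0 = 0$, both $TX$ and $XT$ vanish, so $TXT = 0$ and Lemma \ref{l-finitebounded} with $N=1$ makes $\left(\begin{smallmatrix} T & X \\ 0 & T\end{smallmatrix}\right)$ polynomially bounded. The hypothesis produces $L$ with $X = TL - LT$; applying this identity to $h_0$ and pairing with $k_0$ reduces to $|\langle h_0, k_0\rangle|^2 = 0$ using only $Th_0 = 0$ and $T^*k_0 = 0$, which gives the orthogonality.

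\textbf{Infinite-dimensionality of $\mathcal{N}$.} With $P$ the projection onto $\ker T$ and $Q = I - P$, write $T = \left(\begin{smallmatrix} 0 & A \\ 0 & B\end{smallmatrix}\right)$ on $\ker T \oplus Q\hil$ where $A = PT|_{Q\hil}$ and $B = QT|_{Q\hil}$; then $\mathcal{N} = \ker A$. I argue by contradiction, assuming $\dim \mathcal{N} < \infty$. If $Q\hil$ were finite dimensional then $\ran T$ would be finite dimensional, $\ker T^*$ would be infinite dimensional, and the orthogonality from the first part would force $\ker T \subseteq \overline{\ran T}$, contradicting $\ker T$ being infinite dimensional (which follows from $\hil$ being infinite dimensional). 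So $Q\hil$, and hence $\ker T$ via the injection $Q\hil/\mathcal{N} \hookrightarrow \ker T$ induced by $A$, is infinite dimensional.

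I then exploit the hypothesis on $X = P$: since $TP = 0$, Lemma \ref{l-finitebounded} gives polynomial boundedness, and $\Ext^1_{A(\D)}(T,T) = 0$ yields $L$ with $P = TL - LT$. A block calculation translates this into the three relations $AL_{qk} = I_{\ker T}$, $BL_{qk} = 0$, and the commutator equation $[B, L_{qq}] = L_{qk} A$. The first two identities exhibit $L_{qk}$ as a bounded right inverse of $A|_{\ker B}$, which is also injective since $\ker A \cap \ker B = \ker T \cap Q\hil = \{0\}$. By the open mapping theorem $A|_{\ker B}$ is a topological isomorphism $\ker B \to \ker T$, and a short coset argument (every $q \in Q\hil$ differs from some element of $\ker B$ by a vector in $\mathcal{N} = \ker A$) produces the topological direct sum $Q\hil = \ker B \oplus \mathcal{N}$. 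In this decomposition $B = \left(\begin{smallmatrix} 0 & b_1 \\ 0 & b_2\end{smallmatrix}\right)$ and $L_{qk} A = \left(\begin{smallmatrix} I_{\ker B} & 0 \\ 0 & 0\end{smallmatrix}\right)$, so the $(1,1)$ entry of the commutator equation reads $b_1 L^{nb} = I_{\ker B}$, where $L^{nb}: \ker B \to \mathcal{N}$ is the corresponding block of $L_{qq}$. The left-hand side factors through the finite dimensional space $\mathcal{N}$ and hence has finite rank, whereas $I_{\ker B}$ has infinite rank ($\ker B$ being infinite dimensional). This is the contradiction.

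\textbf{The dual subspace.} For $\{h \in \hil \ominus \ker T^* : T^*h \in \hil \ominus \ker T^*\}$ I would apply the previous argument to $T^*$ via the isomorphism $\Ext^1_{A(\D)}(T,T) \cong \Ext^1_{A(\D)}(T^*,T^*)$ recorded in the introduction. I expect the main technical step to be the decomposition $Q\hil = \ker B \oplus \mathcal{N}$: it is where the hypothesis, through the explicit $L$ with $P = [T, L]$, combines with the finite-dimensionality assumption to produce a workable block structure. Once that decomposition is in hand, the finite-rank versus infinite-rank contradiction in the $(1,1)$-block of the commutator equation is immediate.
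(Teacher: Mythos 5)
Your proof is correct, and although it pivots on the same central identity as the paper's --- feeding the projection $P_{\ker T}$ into Lemma \ref{l-finitebounded} and extracting from $P=TL-LT$ exactly the block relations $AL_{qk}=I$, $BL_{qk}=0$, $[B,L_{qq}]=L_{qk}A$ (the paper's equations (\ref{eqXA}), (\ref{eqYA}) and (\ref{eqcommutator})) --- both of your endgames differ from the paper's. For orthogonality, the paper does not use a rank-one perturbation: it deduces from $XL_{21}=I$ that $X^*$ (your $A^*$) is bounded below, shows $\ol{X^*\ker T}\cap\ol{Y^*(\hil\ominus\ker T)}=\{0\}$ via the adjoint relations, and concludes $\ker T^*\subseteq\hil\ominus\ker T$. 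Your operator $h\mapsto\langle h,k_0\rangle h_0$ together with the direct computation $\langle(TL-LT)h_0,k_0\rangle=0$ is more elementary; it is essentially the Stampfli device the paper deploys later in Theorem \ref{thmDsymm}, where $D$-symmetry is needed only because the pairing there is against $T^*L-LT^*$, a complication your version sidesteps. (Minor slip: the pairing gives $\langle h_0,k_0\rangle^2$ rather than $|\langle h_0,k_0\rangle|^2$, but either way it forces $\langle h_0,k_0\rangle=0$.) For the infinite-dimensionality, the paper notes that $E=L_{qk}A$ is a nonzero idempotent which equals the commutator $[B,L_{qq}]$, and invokes Wintner's theorem (no commutator is a nonzero scalar plus a compact) to rule out $\ker E=\ker A$ being finite dimensional. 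You instead refine to the topological direct sum $Q\hil=\ker B\oplus\mathcal{N}$, read off $b_1L^{nb}=I_{\ker B}$ from the $(1,1)$ block, and get a finite-rank versus infinite-rank contradiction; this trades the appeal to Wintner for the extra, correctly supplied, verification that $Q\hil$ and hence $\ker T\cong\ker B$ are infinite dimensional under the contradiction hypothesis. Both routes are sound: yours is self-contained and avoids the external citation, while the paper's is shorter once Wintner's theorem is granted.
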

\begin{proof}
Throughout the proof, we may assume without loss of generality that both $\ker T$ and $\ker T^*$ are non-trivial.  We write $\hil=\ker T\oplus (\hil\ominus \ker T)$ and with respect to this decomposition of the space we have
$$
T=\left(
\begin{array}{cc}
0& X\\
0& Y
\end{array}
\right).
$$
Let
$$
P=P_{\ker T}=\left(
\begin{array}{cc}
I& 0\\
0& 0
\end{array}
\right)
$$
be the orthogonal projection of $\hil$ onto $\ker T$
and consider the operator
$$
R=\left(
\begin{array}{cc}
T& P\\
0& T
\end{array}
\right)
$$
which acts on $\hil\oplus \hil$. Using Lemma \ref{l-finitebounded} and the fact that $TP=0$, we see that $R$ is polynomially bounded. By assumption, there exists $L\in
B(\hil)$ such that $P=TL-LT$. If we write
$$
L=\left(
\begin{array}{cc}
L_{11}& L_{12}\\
L_{21}& L_{22}
\end{array}
\right),
$$
then we find
\begin{equation}\label{eqcommutator}
\left(
\begin{array}{cc}
I& 0\\
0& 0
\end{array}
\right)=\left(
\begin{array}{cc}
XL_{21}& XL_{22}-L_{11}X-L_{12}Y\\
YL_{21}& YL_{22}-L_{22}Y-L_{21}X
\end{array}
\right).
\end{equation}
In particular, there must exist a bounded linear operator 
$$
L_{21}:\ker T\to \hil\ominus \ker T
$$
satisfying
\begin{equation}\label{eqXA}
XL_{21}=I
\end{equation}
and
\begin{equation}\label{eqYA}
YL_{21}=0.
\end{equation}
A consequence of (\ref{eqXA}) is that $X$
is surjective, or $X^*$
is bounded below. 
Taking adjoints in (\ref{eqXA}) and (\ref{eqYA}) we find that
\begin{equation}\label{eqA*X*}
L_{21}^*X^*=I
\end{equation}
and
\begin{equation}\label{eqA*Y*}
L_{21}^*Y^*=0.
\end{equation}
Choose $h\in \ol{X^* \ker T}\cap \ol{Y^* (\hil\ominus \ker T)}$. Then
$$
h=\lim_{n\to \infty} X^* v_n=\lim_{n\to \infty} Y^* w_n
$$
for some sequences $\{v_n\}_n\subset \ker T$ and $\{w_n\}_n\subset \hil\ominus \ker T$. Using (\ref{eqA*X*}) and (\ref{eqA*Y*}) we get
$$
L_{21}^*h=\lim_{n\to \infty}v_n=0
$$
so that $h=0$. This shows that
$$
\ol{X^* \ker T}\cap \ol{Y^* (\hil\ominus \ker T)}=\{0\}.
$$
Now, we have that a vector $h=h_1\oplus h_2\in \ker T\oplus (\hil\ominus \ker T)$ lies in $\ker T^*$ if and only if
$$
X^* h_1=-Y^*h_2 \in X^* \ker T\cap Y^* (\hil\ominus \ker T).
$$
Since this intersection was already found to be zero, we see that $h_1\in \ker X^*$ and $h_2\in \ker Y^*$. But $X^*$ is bounded below, whence $h_1=0$ and therefore
$$
\ker T^*=0\oplus \ker Y^*\subset \hil\ominus \ker T
$$
which establishes the first statement.
%
%
We now turn to the proof of the second statement. Notice  that in view of (\ref{eqXA}) we have
that the operator
$$
L_{21}X:\hil\ominus \ker T \to \hil\ominus \ker T
$$
is a non-zero idempotent which we denote henceforth by $E$. With respect to the decomposition
$$
\hil\ominus \ker T=\ran E\oplus (\ran E)^\perp
$$
we can write
$$
E= \left(
\begin{array}{cc}
I& F\\
0& 0
\end{array}
\right)
$$
where $\ran E$ denotes the range of $E$.
If we consider the invertible operator
$$
W= \left(
\begin{array}{cc}
I& F\\
0& I
\end{array}
\right)
$$
then we have
$$
WEW^{-1}= \left(
\begin{array}{cc}
I& 0\\
0& 0
\end{array}
\right).
$$
Now, using (\ref{eqcommutator}) we see
that 
$$
YL_{22}-L_{22}Y=E.
$$
Since $\hil$ is infinite dimensional, by a classical theorem of Wintner (see \cite{wintner1947}) we know that $E$ cannot
be written as the sum of a non-zero scalar multiple of the identity
and a compact operator. The same is necessarily true for
$WEW^{-1}$, whence the orthogonal projection onto
$$
\ker (WEW^{-1})=W\ker E=W\ker X
$$
cannot be compact. In other words,
$$
\ker X=\{h\in \hil\ominus \ker T: Th\in \hil\ominus \ker T\}
$$
is infinite dimensional. We can apply the same argument
to $T^*$ to conclude that
$$
\{h\in \hil\ominus \ker T^*: T^*h\in \hil\ominus \ker T^*\}
$$
is also infinite dimensional, which finishes the proof.
\end{proof}

We make a few comments about this result. By Lemma \ref{l-mobius}, we may replace $T$ by $\phi_{\lambda}(T)$ everywhere in the statement of Theorem \ref{t-struct} and thus obtain information about $\ker(T-\lambda)$ and $\ker (T^*-\ol{\lambda})$ for each $\lambda\in \D$. Interestingly, the theorem provides evidence that the spaces $\ker (T-\lambda)$ and $\ker (T^*-\ol{\lambda})$ cannot be too large under the condition
$\Ext_{A(\D)}^1(T,T)=0$. While we don't know at the moment whether or not these spaces must be trivial in general, the following conjecture seems natural: if $\Ext_{A(\D)}^1(T,T)=0$, then $T$ has no eigenvalues inside the unit disc. 

Next, we consider a special class of operators and prove a weaker version of this conjecture for them. We restrict our attention to the so-called $D$-\emph{symmetric} operators which were introduced and studied in \cite{anderson1978},\cite{rosentrater1981} and \cite{stampfli1979}. Recall that an operator $T\in B(\hil)$ is said to be $D$-symmetric if
$$
\ol{\{TL-LT:L\in B(\hil)\}}=\ol{\{T^*L-LT^*:L\in B(\hil\}}.
$$
It was proved in \cite{anderson1978} that the class of $D$-symmetric operators includes normal operators and isometries.

\begin{theorem}\label{thmDsymm}
Let $(\hil,T)$ be a Hilbert module satisfying
$\Ext_{A(\D)}^1(T,T)=0$. If $T$ is $D$-symmetric, then one of the spaces $\ker T$ and $\ker T^*$ is trivial.
\end{theorem}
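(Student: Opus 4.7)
The plan is to argue by contradiction, assuming both $\ker T$ and $\ker T^*$ are nontrivial, and to exploit $D$-symmetry to produce a conflict with the constraints already encoded in Theorem~\ref{t-struct}. A crucial preliminary observation is that the proof of Theorem~\ref{t-struct} delivers more than its statement: the identity $P_{\ker T}=TL-LT$ displayed there shows that $P_{\ker T}$ lies in the actual range $\{TL-LT:L\in B(\hil)\}$ of the inner derivation $\delta_T$, not merely in its norm closure. Because the adjoint isomorphism $[X]\mapsto [X^*]$ gives $\Ext^1_{A(\D)}(T^*,T^*)=0$ as well, running the same argument on $T^*$ produces an $L'\in B(\hil)$ with $P_{\ker T^*}=T^*L'-L'T^*$, so that $P_{\ker T^*}$ belongs to the range of $\delta_{T^*}$.

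Next I would invoke the $D$-symmetry hypothesis in the form $\overline{\{TL-LT:L\in B(\hil)\}}=\overline{\{T^*L-LT^*:L\in B(\hil)\}}$ to obtain sequences $L_n,M_n\in B(\hil)$ satisfying $T^*L_n-L_nT^*\to P_{\ker T}$ and $TM_n-M_nT\to P_{\ker T^*}$ in operator norm. Testing these approximations against unit vectors in $\ker T$ and $\ker T^*$, and using the identities $TP_{\ker T}=0$ and $T^*P_{\ker T^*}=0$ together with the orthogonality $\ker T\perp \ker T^*$ already established in Theorem~\ref{t-struct}, one is led to asymptotic relations of the form $P_{\ker T}L_nT^*h\to -h$ for every $h\in\ker T$ and $P_{\ker T^*}M_nTg\to -g$ for every $g\in\ker T^*$. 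These express the fact that $L_nT^*$ and $M_nT$ asymptotically produce one-sided inverses for $T$ and $T^*$ on each other's kernels.

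The main obstacle I anticipate is turning these two asymptotic identities into a genuine contradiction. The natural strategy is to combine them with the explicit block decomposition of $T$ relative to $\hil=\ker T\oplus \ker T^*\oplus \M$ (where $\M=\hil\ominus(\ker T\oplus \ker T^*)$), paying particular attention to the right inverse $L_{21}$ of the block $X$ manufactured in the proof of Theorem~\ref{t-struct}, and to invoke a Wintner-type obstruction of the same flavor as the one already used there to forbid the identity on a kernel from being so approximated by commutators. I expect the contradiction to arise from the simultaneous presence of both non-zero self-adjoint projections $P_{\ker T}$ and $P_{\ker T^*}$ in the common self-adjoint subspace $\overline{R(\delta_T)}=\overline{R(\delta_{T^*})}$ dictated by $D$-symmetry, as this imposes mutually incompatible demands on the sequences $L_n$ and $M_n$.
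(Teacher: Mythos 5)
There is a genuine gap: your argument never actually reaches a contradiction, and the quantities you compute cannot produce one. The asymptotic identity $P_{\ker T}L_nT^*h\to -h$ for $h\in\ker T$ is correct but harmless --- it is perfectly consistent with everything else in the paper. Indeed, the hypothesis $\Ext^1_{A(\D)}(T,T)=0$ itself forces $P_{\ker T}=TL-LT$ for some $L$, so the mere presence of this projection in $\ol{\{T^*L-LT^*\}}$ (via $D$-symmetry) contradicts nothing: testing $P_{\ker T}$ against a pair $(g,f)$ with $g\in\ker T^*$, $f\in\ker T$ gives $\langle P_{\ker T}g,f\rangle=0$ because the kernels are orthogonal, which matches $\langle (T^*L-LT^*)g,f\rangle=0$ exactly. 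The Wintner obstruction also does not apply here: Wintner forbids the identity (plus a compact) from being a commutator, but a projection with infinite-dimensional kernel can well be one, and that is precisely how the proof of Theorem~\ref{t-struct} uses it --- to force $\ker X$ to be infinite dimensional, not to rule the commutator identity out. So the ``mutually incompatible demands on $L_n$ and $M_n$'' you expect simply are not there.

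The missing idea is Stampfli's: instead of the projections $P_{\ker T}$ and $P_{\ker T^*}$, one must use an operator that \emph{straddles} the two kernels. Take unit vectors $f\in\ker T$, $g\in\ker T^*$ and set $Vx=\langle x,g\rangle f$. Then $TV=0$, so $\left(\begin{smallmatrix}T&V\\0&T\end{smallmatrix}\right)$ is polynomially bounded by Lemma~\ref{l-finitebounded}, hence $[V]\in\Ext^1_{A(\D)}(T,T)$. The point is that $\langle Vg,f\rangle=1$, whereas $\langle (T^*L-LT^*)g,f\rangle=\langle Lg,Tf\rangle-\langle LT^*g,f\rangle=0$ for every $L$, so $V\notin\ol{\{T^*L-LT^*\}}=\ol{\{TL-LT\}}$ by $D$-symmetry, and $[V]\neq 0$. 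Note also that this argument is self-contained and does not require $\hil$ to be infinite dimensional, whereas your route leans on Theorem~\ref{t-struct}, which is stated only in that case.
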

\begin{proof}
Assume that we can find unit vectors $f\in \ker T$ and $g\in \ker T^*$, and define $V\in B(\hil)$ as $Vx=\langle
x,g \rangle f$ for every $x\in \hil$. Consider the operator
$$
R= \left( \begin{array}{cc}
T & V\\
0 & T 
\end{array}\right)
$$
which is polynomially bounded by virtue of Lemma \ref{l-finitebounded} since $TV=0$. Notice now that for every $L\in B(\hil)$ we have
$$
\langle (T^*L-LT^*)g,f \rangle=0
$$
by choice of $f$ and $g$, while $\langle Vg,f \rangle=1$. Thus, $V$ lies outside the set
$$
\ol{\{T^*L-LT^*:L\in B(\hil)\}}.
$$
Since $T$ is assumed to be $D$-symmetric, this set coincides with
$$
\ol{\{TL-LT:L\in B(\hil)\}}
$$
and therefore $V$ cannot be written as $TL-LT$ for some
$L\in B(\hil)$, whence $[V]$ is a non-trivial element in $\Ext_{A(\D)}^1(T,T)$.
\end{proof}

Note that the trick used in the proof above to construct the operator $V$ lying outside
the set
$$\ol{\{T^*L-LT^*:L\in B(\hil)\}}$$ is due to Stampfli and can be found in \cite{stampfli1973}. We close this section by specializing even further and verifying the full conjecture for normal operators.

\begin{theorem}\label{t-normal}
Let $(\hil,T)$ be a Hilbert module such that $T$ is normal. Then,
$T$ is unitary if and only if
$$
\Ext_{A(\D)}^1(T,T)=0.
$$
\end{theorem}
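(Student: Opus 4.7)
My plan is to prove the two implications separately. The forward direction---that a unitary $T$ yields $\Ext^1_{A(\D)}(T,T)=0$---requires no new work, since the Carlson--Clark theorem cited in the introduction from \cite{carlson1994} asserts that every unitary Hilbert module is projective; in particular $\Ext^1_{A(\D)}(T,T)=0$ for unitary $T$.

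For the converse, I would assume $T$ is normal with $\Ext^1_{A(\D)}(T,T)=0$ and argue by contradiction that $\sigma(T)\subset \T$. If $\sigma(T)\not\subset \T$, pick $\lambda_0\in \sigma(T)\cap \D$ and $\epsilon>0$ so small that $\{z:|z-\lambda_0|\leq \epsilon\}\subset \D$. Applying the spectral theorem to the normal operator $T$ with spectral measure $E$, the subspace $\hil_1=E(\{|z-\lambda_0|\leq \epsilon\})\hil$ is nonzero and reduces $T$. Setting $\hil_2=\hil_1^{\perp}$ and $T_i=T|_{\hil_i}$, one obtains an orthogonal decomposition $T=T_1\oplus T_2$ of Hilbert modules in which $T_1$ is normal and satisfies $\|T_1\|\leq |\lambda_0|+\epsilon<1$.

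The next step is to show that the vanishing hypothesis descends to the summand, i.e., $\Ext^1_{A(\D)}(T_1,T_1)=0$. This is a routine chase through Theorem \ref{t-extchar}: if $X\in B(\hil_1)$ makes $\left(\begin{smallmatrix} T_1 & X \\ 0 & T_1 \end{smallmatrix}\right)$ polynomially bounded, then its ampliation $\tilde X = X\oplus 0 \in B(\hil)$ makes $\left(\begin{smallmatrix} T & \tilde X \\ 0 & T \end{smallmatrix}\right)$ polynomially bounded, since after rearranging coordinates this block operator splits as $\left(\begin{smallmatrix} T_1 & X \\ 0 & T_1 \end{smallmatrix}\right) \oplus (T_2\oplus T_2)$. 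Conversely, if $\tilde X = TL-LT$ for some $L=(L_{ij})_{i,j=1,2}$, reading off the $(1,1)$-block of that matrix identity gives $X=T_1L_{11}-L_{11}T_1$, so $[X]=0$ in $\Ext^1_{A(\D)}(T_1,T_1)$.

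The contradiction is then immediate from Proposition \ref{p-strictcontr}: since $\|T_1\|<1$, the space $\A$ in the presentation $\Ext^1_{A(\D)}(T_1,T_1)=\A/\mathcal{J}$ coincides with all of $B(\hil_1)$, so vanishing of the quotient forces every bounded operator on $\hil_1$---in particular the identity $I$---to be a commutator $T_1L-LT_1$. This is impossible by the classical theorem of Wintner already invoked in the proof of Theorem \ref{t-struct}. I expect the descent to the summand $T_1$ to be the step requiring the most care, as it is the only place in the argument where one must directly manipulate the presentation provided by Theorem \ref{t-extchar}; everything else follows by stringing together Proposition \ref{p-strictcontr}, the spectral theorem, and the Wintner obstruction.
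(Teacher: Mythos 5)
Your proposal is correct and follows essentially the same route as the paper: both arguments use the spectral theorem to extract a nonzero reducing subspace on which $T$ is a strict contraction, invoke Proposition \ref{p-strictcontr} to get polynomial boundedness of the relevant block operator, and derive a contradiction from Wintner's theorem that the identity is not a commutator. The only cosmetic difference is that the paper works directly with $X=I\oplus 0$ on the full space and reads off the $(1,1)$-block at the end, whereas you package the same computation as a descent of the vanishing hypothesis to the summand.
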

\begin{proof}
If $T$ is unitary then the module $(\hil,T)$  is projective by Theorem 4.1 of \cite{carlson1994}, so in particular we have
$$
\Ext_{A(\D)}^1(T,T)=0.
$$
Conversely, assume that this extension group vanishes. If $\lambda\in
\sigma(T)\cap \D$, then via the spectral theorem for normal operators we can find a non-zero
reducing subspace $M\subset \hil$ for $T$ such that
$
\|T|M\|<1.
$
With respect to the decomposition $\hil=M\oplus M^{\perp}$, we have
$T=T|M\oplus T|M^{\perp}$. Consider the operator $X=I\oplus 0$. It is easy to verify that the operator
$$
R=\left(
\begin{array}{cc}
T& X\\ 0& T
\end{array}
 \right)
$$
is unitarily equivalent to
$$
\left(
\begin{array}{cc}
T|M& I\\ 0& T|M
\end{array}
 \right)\oplus
 \left(
\begin{array}{cc}
T|M^\perp& 0\\ 0& T|M^\perp
\end{array}
 \right).
$$
Using Proposition
\ref{p-strictcontr}, we see that $R$ is polynomially bounded and thus
$$
[X]\in \Ext_{A(\D)}^1(T,T).
$$
Since we assume that this extension group is zero, we can write
$$
I\oplus 0=X=TL-LT
$$
for some $L\in B(\hil)$. A straightforward calculation shows that
this relation implies
$$
I=(T|M)L'-L'(T|M)
$$
for some operator $L':M\to M$, which is impossible since the identity is
well-known not to be a commutator (see \cite{wintner1947}). This contradiction shows that
$\sigma(T)\subset \T$, and thus the normal operator $T$ is actually
unitary.
\end{proof}

\section{Contractions of class $C_0$}
In this final section, we verify the conjecture made in Section 3 for another special class of operators: the $C_0$
contractions. We start with some background (see \cite{bercovici1988} or \cite{nagy2010} for greater detail).

Let $H^\infty$ be the algebra of bounded holomorphic functions on
the open unit disc. A completely non-unitary contraction $T\in B(\hil)$ is said to be \textit{of class} $C_0$ if the associated Sz.-Nagy--Foias
$H^\infty$ functional calculus has non-trivial kernel. It is known in that case that 
$$
\{u\in H^\infty: u(T)=0\}=\theta H^\infty
$$ 
for some inner function $\theta$ called the minimal
function of $T$ which is uniquely determined up to
a scalar factor of absolute value one. Moreover, we have that 
$$
\sigma_p(T)=\sigma(T)\cap \D
$$
and this set coincides with the set of zeros of $\theta$ on $\D$.

For any inner function $\theta\in H^\infty$, the space
$H(\theta)=H^2\ominus \theta H^2$ is closed and invariant for $S^*$,
the adjoint of the shift operator $S$ on $H^2$. The operator
$S(\theta)$ defined by $S(\theta)^*=S^*|(H^2\ominus \theta H^2)$ is
called a Jordan block; it is of class $C_0$ with minimal
function $\theta$. We record a well-known elementary property of these
operators.

\begin{lemma}\label{l-jbsim}
Let $\theta_1,\theta_2\in H^\infty$ be inner functions such that
$\theta_1 H^\infty +\theta_2 H^\infty =H^\infty$. Then,
$S(\theta_1\theta_2)$ is similar to $S(\theta_1)\oplus S(\theta_2)$.
\end{lemma}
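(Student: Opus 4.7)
The plan is to construct an explicit bounded linear bijection
\[
\Phi : H(\theta_1) \oplus H(\theta_2) \to H(\theta_1\theta_2), \qquad \Phi(h_1, h_2) = \theta_2 h_1 + \theta_1 h_2,
\]
and to verify that it intertwines $S(\theta_1) \oplus S(\theta_2)$ with $S(\theta_1\theta_2)$. Once this is done, the bounded inverse theorem furnishes the similarity. The hypothesis $\theta_1 H^\infty + \theta_2 H^\infty = H^\infty$ means exactly that there is a B\'ezout identity $u_1 \theta_1 + u_2 \theta_2 = 1$ with $u_1, u_2 \in H^\infty$, and this is the single tool driving the argument.

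First I would check that $\Phi$ takes values in $H(\theta_1\theta_2)$: since $\theta_1\theta_2 H^2 \subset \theta_j H^2$, the orthogonal decomposition $\theta_j H^2 = \theta_j H(\theta_{3-j}) \oplus \theta_1\theta_2 H^2$ shows $\theta_j H(\theta_{3-j}) \subset H(\theta_1\theta_2)$ for $j=1,2$. Boundedness of $\Phi$ is immediate because multiplication by an inner function is an isometry on $H^2$. For injectivity, if $\theta_2 h_1 = -\theta_1 h_2$, then multiplying the B\'ezout identity by $h_1$ gives $h_1 = u_1 \theta_1 h_1 + u_2 \theta_2 h_1 = \theta_1(u_1 h_1 - u_2 h_2) \in \theta_1 H^2$, and since $h_1 \in H(\theta_1)$ this forces $h_1 = 0$; an identical argument gives $h_2 = 0$. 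For surjectivity, given $f \in H(\theta_1\theta_2)$ I would write $f = u_1 \theta_1 f + u_2 \theta_2 f$ and apply $P_{H(\theta_1\theta_2)}$ to both sides; since $P_{H(\theta_1\theta_2)}(\theta_j H^2) = \theta_j H(\theta_{3-j})$, this expresses $f$ as $\theta_2 h_1 + \theta_1 h_2$ for some $h_1 \in H(\theta_1)$ and $h_2 \in H(\theta_2)$.

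It remains to verify the intertwining, which by symmetry reduces to showing $S(\theta_1\theta_2)(\theta_1 h_2) = \theta_1 S(\theta_2) h_2$ for every $h_2 \in H(\theta_2)$. Decomposing $z h_2 = S(\theta_2) h_2 + \theta_2 k$ with $k \in H^2$ and multiplying by $\theta_1$, one gets $z \theta_1 h_2 = \theta_1 S(\theta_2) h_2 + \theta_1 \theta_2 k$; the second summand lies in $\theta_1\theta_2 H^2 \perp H(\theta_1\theta_2)$ while the first lies in $\theta_1 H(\theta_2) \subset H(\theta_1\theta_2)$, so $P_{H(\theta_1\theta_2)}$ returns precisely $\theta_1 S(\theta_2) h_2$. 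The main obstacle I anticipate is the surjectivity step, where the coprimality hypothesis is genuinely needed and one must carefully match the projections with the summands $\theta_j H(\theta_{3-j})$; the other steps are essentially bookkeeping about projections in $H^2$ and the Jordan block formalism.
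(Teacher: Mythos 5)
Your proof is correct and complete: the paper itself records this lemma as well-known and gives no proof, and your argument via the map $\Phi(h_1,h_2)=\theta_2h_1+\theta_1h_2$, with the B\'ezout identity $u_1\theta_1+u_2\theta_2=1$ driving both injectivity and surjectivity, is precisely the standard one found in the $C_0$ literature. All the steps check out, including the two points that actually require care -- the identification $P_{H(\theta_1\theta_2)}(\theta_jH^2)=\theta_jH(\theta_{3-j})$ used for surjectivity, and the intertwining computation where $\theta_1\theta_2k$ is killed by the projection -- so there is nothing to add.
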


A more general family of operators consists of the so-called
Jordan operators. Start with a collection of inner functions $\Theta=\{\theta_{\alpha}\}_{\alpha}$ indexed by the ordinal numbers such that $\theta_\alpha=1$ for $\alpha$ large enough and that $\theta_{\beta}$ divides $\theta_{\alpha}$ whenever $\text{card}(\beta)\geq \text{card}(\alpha)$ (recall that a
function $u\in H^\infty$ divides another function $v\in H^\infty$ if
$v=uf$ for some $f\in H^\infty$). Let $\gamma$ be the first ordinal such that $\theta_\gamma=1$. Then, the associated Jordan operator is $J_\Theta=\bigoplus_{\alpha<\gamma} S(\theta_{\alpha})$.

The Jordan
operators are of fundamental importance in the study of operators of
class $C_0$ as the following theorem from \cite{bercovici1975}
illustrates. Recall here that an injective bounded linear operator with dense range is called a quasiaffinity. Two operators $T\in B(\hil)$ and $T'\in B(\hil')$ are said to be
quasisimilar if there exist quasiaffinities $X:\hil\to
\hil'$ and $Y:\hil' \to \hil$ such that $XT=T' X$ and $T Y=YT'$.

\begin{theorem}\label{t-classification}
For any operator $T$ of class $C_0$ there exists a unique Jordan operator
$J$ which is quasisimilar to $T$.
\end{theorem}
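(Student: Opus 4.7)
The plan is to split the theorem into an existence and a uniqueness part, proving each by reduction to simpler building blocks that are already understood.

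For existence, I would first dispatch the cyclic case: if $T$ is of class $C_0$ with cyclic vector $h_0$ and minimal function $\theta$, then $T$ is quasisimilar to $S(\theta)$. To see this, I would define $X : H(\theta) \to \hil$ by $u + \theta H^2 \mapsto u(T) h_0$; this is well-defined and bounded since $\theta(T)=0$, has dense range by cyclicity of $h_0$, and is injective because $u(T) h_0 = 0$ forces $\theta \mid u$. Running the symmetric construction with $T^*$ (which is also $C_0$) supplies an intertwiner in the other direction, giving quasisimilarity. For the general case, I would build the Jordan model by transfinite induction: at stage $\alpha$, pick a vector $h_\alpha$ whose local minimal function $\theta_\alpha$ is maximal among divisors of all previously chosen minimal functions, generate the cyclic invariant subspace $M_\alpha$, and continue the construction on a suitable complementary piece. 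The cyclic case then identifies $T | M_\alpha$ with $S(\theta_\alpha)$ up to quasisimilarity, and the maximality choice forces the divisibility constraints defining a Jordan operator.

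For uniqueness, I would introduce, for each inner $u$ dividing $m_T$, a multiplicity invariant $\mu_T(u)$ defined using dimensions of $\ker u(T)$ modulo kernels of proper divisors, organized over the lattice of divisors of $m_T$. A quasiaffinity $X$ intertwining $T$ and $T'$ restricts to an injection $\ker u(T) \hookrightarrow \ker u(T')$ by $h \mapsto Xh$, and applying the reverse quasiaffinity yields the opposite inequality, so $\mu_T = \mu_{T'}$. For a Jordan operator $J_\Theta = \bigoplus_{\alpha < \gamma} S(\theta_\alpha)$, an explicit computation with the blocks, aided by Lemma \ref{l-jbsim} to handle coprime factorizations, recovers $\Theta$ from $\mu_{J_\Theta}$; so any two Jordan operators quasisimilar to the same $T$ must coincide.

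The main obstacle will be the transfinite induction in existence, specifically ensuring that the resulting decomposition exhausts $\hil$ up to quasisimilarity rather than merely a proper invariant subspace. This requires both a careful ``maximality'' choice at each stage to keep the $\theta_\alpha$'s ordered by divisibility, and a stability argument at limit ordinals showing that no piece of $\hil$ slips through. A splitting principle in the spirit of Lemma \ref{l-jbsim} --- if $m_T = \theta_1 \theta_2$ with $\theta_1, \theta_2$ coprime, then $T$ is quasisimilar to the corresponding direct sum --- is useful here, reducing many configurations to the case where $m_T$ has a single prime factor and allowing induction on complexity. The extraction of a complementary piece also requires genuine input from the Sz.-Nagy--Foias model, since the complement of a cyclic subspace in the quasisimilarity category need not be an actual invariant subspace.
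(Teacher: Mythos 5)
First, a point of context: the paper does not prove this theorem. It is quoted as a known result from the reference \cite{bercovici1975} (the classification of $C_0$ operators by their Jordan models), so there is no in-paper argument to compare yours against. Your sketch does follow the broad architecture of the proof in the literature --- cyclic case first, transfinite induction for existence, an invariant for uniqueness --- but several load-bearing steps are either unjustified or would fail as stated, and this is a theorem whose difficulty lives precisely in those steps.

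Concretely: (1) In the cyclic case, the relation $\theta(T)=0$ makes the map $u+\theta H^2\mapsto u(T)h_0$ well defined on $H^\infty$ representatives, but it gives no control of $\|u(T)h_0\|$ by the $H^2$ norm, which is the norm carried by $H(\theta)=H^2\ominus\theta H^2$; the functional calculus only yields $\|u(T)h_0\|\leq \|u\|_\infty\|h_0\|$, and the two norms are incomparable on $H(\theta)$. Even for $T=S(\theta)$ the map $u\mapsto P_{H(\theta)}(uh_0)$ need not be $H^2$-bounded for an arbitrary cyclic $h_0$. Establishing the quasiaffinities here (in the literature this goes through the Sz.-Nagy--Foias model and commutant lifting) is the actual content of the cyclic case; moreover, invoking ``the symmetric construction with $T^*$'' presupposes that $T^*$ is again cyclic, a nontrivial fact usually deduced \emph{after} this classification, so there is a circularity to dispel. (2) The uniqueness invariant fails for cardinality reasons: a quasiaffinity does give $\dim\ker u(T)=\dim\ker u(T')$ for every inner $u$, but $\ker u(S(\theta))$ has the same dimension as $H(u\wedge\theta)$, which is already $\aleph_0$ whenever $u\wedge\theta$ is not a finite Blaschke product. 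Thus $S(\theta)$ and $S(\theta)\oplus S(\theta)$ (two distinct Jordan operators) have identical kernel dimensions for every $u$, and ``organizing over the lattice of divisors'' does not repair this collapse. The actual uniqueness proof characterizes each $\theta_\alpha$ intrinsically (via greatest common inner divisors of minimal functions of restrictions to invariant subspaces generated by small sets of vectors) together with the quasisimilarity invariance of multiplicity. (3) You correctly identify the exhaustion problem in the transfinite induction as the main obstacle, but you do not resolve it --- and that, with the splitting machinery, is the bulk of Bercovici's argument. In short, the skeleton is the right one, but each of its three joints is missing, and one of them (the kernel-dimension invariant) is not merely incomplete but provably insufficient.
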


With these preliminaries out of the way, we return to the problem at hand. We start with the simple case where the space $\hil$ is finite dimensional, thus complementing Theorem \ref{t-struct}.

\begin{lemma}\label{l-matrices}
Let $(\C^n, T)$ be a Hilbert module. Then, $$\Ext_{A(\D)}^1(T,T)=0$$
if and only if $T$ is similar to a unitary.
\end{lemma}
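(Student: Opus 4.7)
The strategy is to handle the two implications separately. Sufficiency is immediate from Theorem 4.1 of \cite{carlson1994}, which asserts that unitary Hilbert modules are projective; since extension groups are similarity invariant, $\Ext_{A(\D)}^1(T,T)=0$ whenever $T$ is similar to a unitary.

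For necessity, assume $\Ext_{A(\D)}^1(T,T)=0$. The first step is to show $\sigma(T)\subset \T$. Suppose toward contradiction that $\lambda\in\sigma(T)\cap\D$. By Lemma \ref{l-mobius}, the group $\Ext_{A(\D)}^1(\phi_{\lambda}(T),\phi_{\lambda}(T))$ also vanishes, and $0\in\sigma(\phi_{\lambda}(T))$ since $\phi_{\lambda}(\lambda)=0$, so after replacing $T$ by $\phi_{\lambda}(T)$ we may assume $\ker T\neq 0$. Mimicking the opening of the proof of Theorem \ref{t-struct}, decompose $\hil=\ker T\oplus (\hil\ominus\ker T)$ and write
$$
T=\left(\begin{array}{cc} 0 & X \\ 0 & Y \end{array}\right),\qquad P=\left(\begin{array}{cc} I & 0 \\ 0 & 0 \end{array}\right).
$$
Lemma \ref{l-finitebounded} applies since $TP=0$, so $R=\bigl(\begin{smallmatrix} T & P \\ 0 & T \end{smallmatrix}\bigr)$ is polynomially bounded, and by hypothesis we can write $P=TL-LT$ for some $L\in B(\hil)$. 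Expanding blockwise exactly as in (\ref{eqcommutator}) gives $XL_{21}=I_{\ker T}$ and $L_{21}X=YL_{22}-L_{22}Y$. The finite-dimensional substitute for Wintner's theorem (used in Theorem \ref{t-struct}) is now the trace: on the one hand $\tr(L_{21}X)=\tr(XL_{21})=\dim\ker T>0$, while on the other the commutator $YL_{22}-L_{22}Y$ has trace zero. This contradiction forces $\sigma(T)\subset\T$.

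To finish, recall that any polynomially bounded operator is power bounded, so $\|T^n\|$ remains bounded as $n\to\infty$. On the finite-dimensional space $\C^n$ this rules out Jordan blocks of size $\geq 2$ at points of $\T$, and since $\sigma(T)\subset\T$ we conclude that $T$ is diagonalizable with spectrum on the unit circle, hence similar to a diagonal unitary.

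The main obstacle is locating a finite-dimensional replacement for the use of Wintner's theorem in Theorem \ref{t-struct}, whose infinite-dimensional hypothesis is essential there; the key observation is that the ordinary trace supplies exactly such an obstruction, because the identity on a nonzero space has strictly positive trace while any commutator of matrices has trace zero.
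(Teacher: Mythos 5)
Your proof is correct, but it reaches the key conclusion $\sigma(T)\subset\T$ by a genuinely different mechanism than the paper. The paper puts $T$ directly into Jordan canonical form, isolates a cell $J=J_{\lambda,m}$ with $\lambda\in\D$, takes $X=I\oplus 0$, and proves that $\bigl(\begin{smallmatrix}J&I\\0&J\end{smallmatrix}\bigr)$ is polynomially bounded by writing out $f(J)$ explicitly and invoking the Cauchy estimates for $f^{(k)}(\lambda)$ (the same strict-contraction phenomenon as in Proposition \ref{p-strictcontr}); the obstruction is then that $X$ has non-zero trace while every commutator is traceless. You instead avoid the Jordan form at this stage: you move the eigenvalue to the origin with Lemma \ref{l-mobius}, take $X=P_{\ker T}$, and get polynomial boundedness from Lemma \ref{l-finitebounded} via $TPT=0$, exactly as in the opening of Theorem \ref{t-struct}; your trace computation is then the finite-dimensional surrogate for Wintner's theorem, precisely as you say. (In fact you could shortcut your own block computation: $P=TL-LT$ already forces $\operatorname{tr}P=0$, contradicting $\operatorname{tr}P=\dim\ker T>0$.) Both proofs finish identically, using power-boundedness to exclude Jordan cells of size at least two on $\T$. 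What each buys: yours makes the finite-dimensional lemma a literal specialization of the structure theorem, reusing Lemmas \ref{l-mobius} and \ref{l-finitebounded} and requiring no functional-calculus computation; the paper's is self-contained at the matrix level and, by working cell by cell, exhibits the non-vanishing of $\Ext^1_{A(\D)}(J_{\lambda,n},J_{\lambda,n})$ in the concrete form that is recycled in the proof of Theorem \ref{t-C0} (though your version of the lemma supplies that corollary equally well).
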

\begin{proof}
As before, if $T$ is similar to a unitary then by Theorem 4.1 of \cite{carlson1994} we know that the module $(\C^n, T)$ is projective and thus
$$\Ext_{A(\D)}^1(T,T)=0.$$
Assume conversely that this extension group vanishes. This condition is
invariant under similarity, so we may assume in addition that $T$ is of the form
$$
T=J_{\lambda_1,m_1}\oplus\ldots\oplus J_{\lambda_d,m_d}
$$
where $J_{\lambda,m}$ is the usual $m\times m$ Jordan cell with
eigenvalue $\lambda$. Suppose that one of the eigenvalues lies inside
$\D$. In other words, we have $ T=J\oplus T' $ where
$J=J_{\lambda,m}$ for some $\lambda\in \D$ and $1\leq m \leq n$.
Correspondingly, define $X=I\oplus 0$.
It is easy to verify that the operator
$$
R=\left(
\begin{array}{cc}
T& X\\ 0& T
\end{array}
 \right)
$$
is unitarily equivalent to
$$
\left(
\begin{array}{cc}
J& I\\ 0& J
\end{array}
 \right)\oplus
 \left(
\begin{array}{cc}
T'& 0\\ 0& T'
\end{array}
 \right).
$$
Applying a polynomial $p$ to the operator
$$
\left(\begin{array}{cc}
J& I\\ 0& J
\end{array}
 \right)
$$
yields
$$
\left(\begin{array}{cc}
p(J)& p'(J)\\ 0& p(J)
\end{array}
 \right).
$$
On the other hand, an easy computation shows that
$$
f(J)=\left(
\begin{array}{ccccc}
f(\lambda) & f'(\lambda) & f''(\lambda)/2 &\ldots & f^{(m-1)}(\lambda)/(m-1)!\\
0 & f(\lambda) & f'(\lambda) & \ldots & f^{(m-2)}(\lambda)/(m-2)!\\
0 &  0 & f(\lambda) & \ldots & f^{(m-3)}(\lambda)/(m-3)!\\
\vdots & \vdots &  & \ddots & \vdots \\ 
0 & 0 & 0 & \cdots& f(\lambda)
\end{array}
\right)
$$
for every $f\in A(\D)$.
Since $|\lambda|<1$, the classical Cauchy estimates for derivatives of holomorphic functions imply that the operator
$$
\left(\begin{array}{cc}
J& I\\ 0& J
\end{array}
 \right)
$$
is polynomially bounded, and thus so is $R$.

Now, $X$ has non-zero trace and thus cannot be
written as $TL-LT$ for some $L\in B(\C^n)$. Equivalently, $X$ gives
rise to a non-trivial element of $\Ext_{A(\D)}^1(T,T)$, which is a
contradiction. Thus, $\sigma(T)\subset \T$. Since a Jordan cell
$J_{\lambda,m}$ is power-bounded only when $|\lambda|<1$ or $m=1$,
we conclude that every Jordan cell of $T$ has size one, whence $T$ is diagonalizable and hence similar to a
unitary.
\end{proof}

We now tackle the general case where $T\in B(\hil)$ is of class $C_0$.
We begin with an elementary fact.

\begin{lemma}\label{lemmasubsp}
Let $M_1,M_2\subset \hil$ be two closed subspaces with trivial intersection such that $M_1$
has finite dimension. Then, the operator $R:
M_1\oplus M_2\to M_1+ M_2$ defined as $R(m_1\oplus m_2)=m_1+m_2$
is bounded and invertible.
\end{lemma}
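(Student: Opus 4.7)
The plan is to verify that $R$ is bounded, injective, and surjective, and then derive a lower bound $\|Rv\| \geq c\|v\|$ that simultaneously shows $R^{-1}$ is bounded and $M_1+M_2$ is closed. Boundedness of $R$ is immediate from the triangle inequality, since $\|m_1+m_2\|\leq \|m_1\|+\|m_2\|\leq \sqrt{2}\,\|m_1\oplus m_2\|$. Surjectivity is by definition of the algebraic sum. Injectivity follows from the hypothesis $M_1\cap M_2=\{0\}$, because $Rv=0$ means $m_1=-m_2\in M_1\cap M_2$.

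The crux is the lower bound, and this is where the finite-dimensionality of $M_1$ enters. First I would consider the continuous function $\delta:M_1\to [0,\infty)$ defined by $\delta(m_1)=\mathrm{dist}(m_1,M_2)$; since $M_2$ is closed and $M_1\cap M_2=\{0\}$, we have $\delta(m_1)>0$ whenever $m_1\neq 0$. Because the unit sphere of $M_1$ is compact (here is the one place we use $\dim M_1<\infty$), $\delta$ attains a positive minimum $c>0$ on that sphere, and by homogeneity
\[
\mathrm{dist}(m_1,M_2)\geq c\,\|m_1\|\qquad\text{for all }m_1\in M_1.
\]
In particular, for any $m_2\in M_2$, $\|m_1+m_2\|\geq \mathrm{dist}(m_1,M_2)\geq c\,\|m_1\|$, and then the triangle inequality gives $\|m_2\|\leq \|m_1\|+\|m_1+m_2\|\leq (1+c^{-1})\|m_1+m_2\|$. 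Combining the two estimates yields a constant $C>0$ with $\|m_1\oplus m_2\|\leq C\,\|m_1+m_2\|$ for every $m_1\in M_1$, $m_2\in M_2$.

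This last inequality says exactly that $R^{-1}$ is bounded on its range $M_1+M_2$, and as a bonus it implies that $M_1+M_2$ is closed in $\hil$ (any Cauchy sequence in $M_1+M_2$ pulls back to a Cauchy sequence in the Hilbert space $M_1\oplus M_2$, whose limit is mapped to the limit in $M_1+M_2$). Hence $R$ is a bounded bijection between Hilbert spaces with bounded inverse, which is the claim. The only real obstacle is the compactness step giving the uniform separation constant $c$; without $\dim M_1<\infty$ the distance $\delta$ can fail to be bounded below on the unit sphere, and indeed the conclusion of the lemma is false for general closed subspaces with trivial intersection.
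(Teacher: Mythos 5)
Your proof is correct. It follows the same skeleton as the paper's argument (bounded, injective, surjective, then deal with invertibility), but where the paper simply cites the standard fact that the sum of a finite-dimensional subspace and a closed subspace is closed and then concludes invertibility --- implicitly via the inverse mapping theorem applied to a bounded bijection of Banach spaces --- you instead prove the quantitative estimate $\mathrm{dist}(m_1,M_2)\geq c\|m_1\|$ by compactness of the unit sphere of $M_1$, and from it read off both the boundedness of $R^{-1}$ and the closedness of $M_1+M_2$ directly. This buys a self-contained argument with an explicit bound on $\|R^{-1}\|$ and no appeal to the open mapping theorem, at the cost of a few more lines; your closing remark that the compactness step is exactly where finite-dimensionality is needed, and that the statement fails for general pairs of closed subspaces with trivial intersection, is accurate and correctly identifies the crux of the lemma.
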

\begin{proof}
It is clear $R$ is surjective, and it is injective as well since $M_1\cap M_2=\{0\}$.  A straightforward estimate shows that $R$ is bounded. Since $M_1$ is finite dimensional and $M_2$ is closed, the algebraic sum $M_1+M_2$ is closed and thus $R$ is invertible.
\end{proof}

We need one more preliminary tool. The result is well-known but we provide a proof for the reader's convenience.

\begin{lemma}\label{lemmaC0sim}
Let $T\in B(\hil)$ be an operator of class $C_0$ such that
$\lambda\in \sigma(T)\cap \D$. Then, $T$ is similar
$J_{\lambda,n}\oplus T'$ for some $n\geq 1$ and some operator $T'$.
\end{lemma}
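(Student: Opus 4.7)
My plan is to produce the required Jordan cell summand in two stages: first, use a Bezout argument in $H^\infty$ to split $T$ into a ``local'' piece at $\lambda$ and a complementary piece; second, carry out a dual-functional construction to peel a single Jordan chain of length $n$ off the local piece, invoking Lemma \ref{lemmasubsp} for the final similarity.

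Let $\theta$ be the minimal function of $T$, factored as $\theta=\phi_{\lambda}^n\psi$ where $n\geq 1$ is the order of the zero of $\theta$ at $\lambda$ and $\psi$ is an inner function with $\psi(\lambda)\neq 0$. Since $\phi_\lambda^n$ is a finite Blaschke product with its only zero at $\lambda$, one has $\inf_{z\in\D}(|\phi_\lambda^n(z)|+|\psi(z)|)>0$, so a Bezout relation $f\phi_\lambda^n+g\psi=1$ holds in $H^\infty$. Through the $H^\infty$ functional calculus and the relation $\theta(T)=0$, the operators $P=g(T)\psi(T)$ and $Q=f(T)\phi_\lambda^n(T)$ become commuting idempotents satisfying $P+Q=I$, $PQ=0$, $\ran P=\ker\phi_\lambda^n(T)$ and $\ran Q=\ker\psi(T)$. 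This gives a $T$-invariant topological direct sum $\hil=\hil_1\oplus\hil_2$ with $\hil_1=\ker\phi_\lambda^n(T)$ and $\hil_2=\ker\psi(T)$, so $T$ is similar to $T|\hil_1\oplus T|\hil_2$. The minimal function of $T|\hil_1$ is precisely $\phi_\lambda^n$, hence (by invertibility of $I-\overline{\lambda}T$) $(T-\lambda)^n=0$ on $\hil_1$ while $(T-\lambda)^{n-1}$ is not identically zero there.

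Now pick $y_0\in\hil_1$ with $(T-\lambda)^{n-1}y_0\neq 0$, and let $M=\operatorname{span}\{y_0,(T-\lambda)y_0,\ldots,(T-\lambda)^{n-1}y_0\}$. This is an $n$-dimensional $T$-invariant subspace of $\hil_1$ on which $T$ acts as $J_{\lambda,n}$. To extract a $T$-invariant closed complement $M'\subset\hil_1$, apply Hahn--Banach to choose $\phi\in \hil_1^*$ with $\phi((T-\lambda)^{n-1}y_0)=1$ and $\phi((T-\lambda)^j y_0)=0$ for $j<n-1$, and define $\psi_i=((T-\lambda)^*)^{n-1-i}\phi$ for $0\leq i\leq n-1$. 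A direct computation yields $\psi_i((T-\lambda)^j y_0)=\delta_{ij}$, so $M':=\bigcap_i\ker\psi_i$ algebraically complements $M$ in $\hil_1$; invariance of $M'$ under $T-\lambda$ holds because $\psi_i\circ(T-\lambda)=\psi_{i-1}$ for $i\geq 1$ while $\psi_0\circ(T-\lambda)$ factors through $(T-\lambda)^n=0$ on $\hil_1$. Setting $M_1=M$ and $M_2=M'+\hil_2$ (a closed $T$-invariant subspace with $M_1\cap M_2=\{0\}$ and $M_1+M_2=\hil$), Lemma \ref{lemmasubsp} produces a bounded invertible $R\colon M_1\oplus M_2\to\hil$ intertwining $T|M_1\oplus T|M_2$ with $T$, and the conclusion follows with $T':=T|M_2$.

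The step I expect to be the main obstacle is constructing the $T$-invariant closed complement $M'$ to the finite-dimensional Jordan chain $M$. The preliminary Bezout splitting is what makes this possible: restricting to $\hil_1$ forces $(T-\lambda)^n=0$, which is precisely the relation that causes the dual chain $\psi_0,\ldots,\psi_{n-1}$ to close up at the bottom so that $(T-\lambda)^*\psi_0=0$ on $\hil_1$ and $M'$ is genuinely $(T-\lambda)$-invariant. Without this localization, $(T-\lambda)^*\psi_0$ would not vanish and the candidate complement would fail to be invariant.
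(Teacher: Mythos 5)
Your proof is correct, but it takes a genuinely different route from the paper's. The paper also starts from the factorization $\theta=\phi_{\lambda}^n\psi$ and the corona theorem, but it then uses Lemma \ref{l-jbsim} to split the Jordan block $S(\theta)$ as $S(\phi_{\lambda}^n)\oplus S(\psi)$, invokes the classification theorem (Theorem \ref{t-classification}) to produce a quasiaffinity $Y$ intertwining $S(\phi_{\lambda}^n)\oplus J'$ with $T$, and sets $M_1=Y(H(\phi_{\lambda}^n)\oplus 0)$ and $M_2=\ol{Y(0\oplus \kil')}$ before applying Lemma \ref{lemmasubsp}; there $M_1$ is automatically $n$-dimensional with minimal polynomial $(z-\lambda)^n$, so no further work is needed. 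You instead convert the corona estimate into a Bezout identity and use the $H^\infty$ functional calculus to build commuting idempotents yielding the Riesz-type splitting of $\hil$ into the $T$-invariant pieces $\ker\phi_{\lambda}^n(T)$ and $\ker\psi(T)$, and then---since the local piece $\hil_1=\ker\phi_{\lambda}^n(T)$ may well be infinite dimensional---peel off a single Jordan chain using a biorthogonal system of functionals. This bypasses the Jordan model and all quasisimilarity machinery, so your argument is more elementary and self-contained; the price is the extra chain-peeling step, whose crux (that $(T-\lambda)^n=0$ on $\hil_1$ forces the dual chain to close up at the bottom, making the complement invariant) you identify correctly. Two small points deserve an explicit word: the minimal function of $T|\hil_1$ is exactly $\phi_{\lambda}^n$ because $\theta$ is the least common inner multiple of the coprime minimal functions of the two summands, and this is what guarantees the existence of $y_0$ with $(T-\lambda)^{n-1}y_0\neq 0$; and $M'+\hil_2$ is closed because it coincides with $P^{-1}(M')$ for the bounded idempotent $P$, so the hypotheses of Lemma \ref{lemmasubsp} are indeed met.
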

\begin{proof}
If we denote by $\theta$ the minimal function of $T$ and we set as before
$$
\phi_{\lambda}(z)=\frac{z-\lambda}{1-\ol{\lambda}z}
$$
then we can write $\theta=\phi^n_{\lambda} \psi$ where $\psi(\lambda)\neq 0$. It is clear that
$$
\inf_{z\in \D}\{|\psi(z)|+|\phi_{\lambda}^n(z)|\}>0
$$
so by Carleson's corona theorem (see \cite{carleson1958}) we conclude that
$$
\phi^n_{\lambda} H^\infty+\psi H^\infty=H^\infty.
$$
By virtue of Lemma \ref{l-jbsim}, we have that $S(\theta)$ is similar to $S(\phi^n_{\lambda})\oplus S(\psi)$. Now, if $J$ denotes the Jordan model of $T$, then this discussion shows that $J$ is similar to $S(\phi^n_{\lambda}) \oplus J'$ for some operator $J'$, and by Theorem \ref{t-classification} we have that $T$ is quasisimilar to $S(\phi^n_{\lambda})\oplus J'$. If we denote the space on which $S(\phi^n_{\lambda})\oplus J'$ acts by $\kil=H(\phi_{\lambda}^n)\oplus \kil'$, then we can find a quasiaffinity 
$$
Y: \kil\to \hil
$$
such that 
$$
Y(S(\phi^n_{\lambda})\oplus J')=TY.
$$
Let 
$$
M_1=\ol{Y(H(\phi^n_{\lambda})\oplus 0)}
$$
and
$$
M_2=\ol{Y(0\oplus \kil')}.
$$
By Lemma \ref{lemmasubsp}, we have that the operator $R: M_1\oplus M_2\to M_1+ M_2$ defined as $R(m_1\oplus m_2)=m_1+m_2$ is bounded and invertible, and it obviously intertwines $T$ with $T|M_1\oplus T|M_2$. Hence, $T$ is similar to $T|M_1\oplus T|M_2$. But $M_1$ is finite dimensional and the minimal polynomial of $T|M_1$ is clearly $(z-\lambda)^n$, so we find that $T$ is similar to $J_{\lambda,n}\oplus T'$.
\end{proof}

Finally, we come to the main result of this section. Although weaker, it is reminiscent of both Lemma \ref{l-matrices} and Theorem \ref{t-normal}.

\begin{theorem}\label{t-C0}
Let $T\in B(\hil)$ be an operator of class $C_0$ such that
$$\Ext_{A(\D)}^1(T,T)=0.$$ Then, the spectrum of $T$ lies on the unit
circle.
\end{theorem}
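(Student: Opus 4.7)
The plan is to argue by contradiction, directly transplanting the strategy from Lemma \ref{l-matrices} and Theorem \ref{t-normal}, with Lemma \ref{lemmaC0sim} doing the essential work of producing a genuine finite-dimensional Jordan summand. Suppose $\sigma(T)\cap \D$ contains some point $\lambda$. By Lemma \ref{lemmaC0sim} there exist $n\geq 1$ and an operator $T'$ on some Hilbert space $\kil'$ such that $T$ is similar to $J_{\lambda,n}\oplus T'$. Since the group $\Ext_{A(\D)}^1(T,T)$ is invariant under similarity, I may assume outright that $\hil=\C^n\oplus \kil'$ and $T=J_{\lambda,n}\oplus T'$.

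Next, following Lemma \ref{l-matrices}, I would exhibit a candidate nonzero element of the extension group. Set $X=I_n\oplus 0$. Then the $2\times 2$ block operator
$$
R=\left(\begin{array}{cc} T & X \\ 0 & T \end{array}\right)
$$
is unitarily equivalent to
$$
\left(\begin{array}{cc} J_{\lambda,n} & I_n \\ 0 & J_{\lambda,n} \end{array}\right)\oplus \left(\begin{array}{cc} T' & 0 \\ 0 & T' \end{array}\right).
$$
The first summand is polynomially bounded by exactly the Cauchy-estimate computation already carried out in Lemma \ref{l-matrices}, since $|\lambda|<1$. The second summand is polynomially bounded because $T'$ is a direct summand of an operator similar to the $C_0$ contraction $T$, and polynomial boundedness is inherited by summands. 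Hence $R$ is polynomially bounded and $[X]\in \Ext_{A(\D)}^1(T,T)$.

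The hypothesis then forces $X=TL-LT$ for some $L\in B(\hil)$. Writing $L$ as a block matrix with entries $L_{ij}$ ($i,j\in\{1,2\}$) with respect to the decomposition $\hil=\C^n\oplus \kil'$, the $(1,1)$ block of the identity $X=TL-LT$ reads
$$
I_n = J_{\lambda,n}L_{11}-L_{11}J_{\lambda,n}.
$$
Taking traces in $\C^n$ yields $n=0$, the contradiction sought. Hence no such $\lambda$ exists, and combined with the standing inclusion $\sigma(T)\subset \ol{\D}$ this gives $\sigma(T)\subset \T$.

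The only conceptual obstacle I foresee is the passage from "$\lambda\in \sigma(T)\cap \D$" to "$T$ has a genuine finite-dimensional Jordan cell summand", because for $C_0$ operators one generally only has quasisimilarity to a Jordan model, and extension groups are not known to be invariant under quasisimilarity. That gap is precisely closed by Lemma \ref{lemmaC0sim} (which relies on the corona theorem together with the Jordan-block decomposition of Lemma \ref{l-jbsim}); once that tool is available, the remainder of the argument is a verbatim adaptation of the finite-dimensional contradiction in Lemma \ref{l-matrices}.
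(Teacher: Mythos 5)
Your proof is correct and follows essentially the same route as the paper's: both reduce via Lemma \ref{lemmaC0sim} to the case $T=J_{\lambda,n}\oplus T'$ and then exhibit a polynomially bounded block extension whose off-diagonal entry cannot be a commutator. The only cosmetic difference is that you take the explicit witness $X=I_n\oplus 0$ and rerun the trace obstruction on the $(1,1)$ block, whereas the paper invokes the conclusion of Lemma \ref{l-matrices} to supply an abstract witness $X$ for the Jordan cell and pads it with $0$.
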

\begin{proof}
Assume that $\lambda\in \sigma(T)\cap \D$. The condition
$\Ext_{A(\D)}^1(T,T)=0$ is invariant under similarity, so by Lemma
\ref{lemmaC0sim} we may assume that $T$ is of the form
$T=J_{\lambda,n}\oplus T'$ for some $n\geq 1$ and some operator
$T'$. By Lemma \ref{l-matrices}, we have that
$$
\Ext_{A(\D)}^1(J_{\lambda,n},J_{\lambda,n})\neq 0
$$
so that there exists an operator $X$ with the property that
$$
\left(\begin{array}{cc}
J_{\lambda,n}& X\\ 0& J_{\lambda,n}
\end{array}
 \right)
$$
is polynomially bounded but
$$
X\neq J_{\lambda,n}L-LJ_{\lambda,n}
$$
for every $L$. Consider now $Y=X\oplus 0$ and
$$
R=\left(
\begin{array}{cc}
T& Y\\ 0& T
\end{array}
 \right).
$$
The operator $R$ is unitarily equivalent to
$$
\left(
\begin{array}{cc}
J_{\lambda,n}& X\\ 0& J_{\lambda,n}
\end{array}
 \right)\oplus
 \left(
\begin{array}{cc}
T'& 0\\ 0& T'
\end{array}
 \right)
$$
and thus it is polynomially bounded. Suppose now that there exists an operator $A$ such that 
$$
Y=TA-AT.
$$
A straightforward calculation shows that this relation implies
$$
X=J_{\lambda,n}A'-A'J_{\lambda,n}
$$
for some operator $A'$,
which is absurd. Hence, $[Y]$ yields a non-trivial element of $\Ext_{A(\D)}^1(T,T).$
\end{proof}

In conclusion, we remark that the main results obtained in this paper extend what was already known about the spectrum of \emph{contractive }projective modules. Indeed, we mentioned in the introduction that every such module is (similar to) an isometry, and isometries do not have point spectrum in the unit disc. This is exactly the type of behavior described in Theorems \ref{t-projspec}, \ref{t-lspec}, \ref{t-struct}, \ref{t-normal} and \ref{t-C0}. Moreover, we reiterate that our results were obtained for modules which are not necessarily similar to a contractive one, and under conditions that are formally weaker than projectivity.

\bibliography{/home/raphael/Dropbox/Research/biblio}
\bibliographystyle{plain}

\end{document}